%%%%%%%%%%%%%%%%%%%%%%%%%%%%%%%%%%%%%%%%%%%%%%%%%%%%%%%%%%%%%%%%%%%%%%%%%%
%%%%%%%%%%%%%%%%%%%%%%%%%%%%%%%%%%%%%%%%%%%%%%%%%%%%%%%%%%%%%%%%%%%%%%%%%%
\documentclass[12pt,reqno]{amsart}

\newcommand\version{\today}

%--- Packages ---

\usepackage{amsmath,amsfonts,amsthm,amssymb,amsxtra,bbm}
\usepackage{fourier,setspace,graphicx,xcolor}
\usepackage[colorlinks=true]{hyperref}
\hypersetup{urlcolor=blue, linkcolor=blue, citecolor=red, anchorcolor=blue]}

%--- Page structure ---

\setlength{\voffset}{-.7truein}
\setlength{\textheight}{8.8truein}
\setlength{\textwidth}{6.05truein}
\setlength{\hoffset}{-.7truein}

%--- Theorem structure ---

\newtheorem{theorem}{Theorem}
\newtheorem{proposition}[theorem]{Proposition}
\newtheorem{lemma}[theorem]{Lemma}
\newtheorem{corollary}[theorem]{Corollary}

\theoremstyle{definition}

\theoremstyle{remark}

\newtheorem{remark}[theorem]{Remark}

%--- Commands and math operators ---

\newcommand{\be}[1]{\begin{equation}\label{#1}}
\newcommand{\ee}{\end{equation}}
\renewcommand{\(}{\left(}
\renewcommand{\)}{\right)}
\newcommand{\1}{\mathbbm{1}}

\renewcommand{\epsilon}{\varepsilon}

\renewcommand{\phi}{\varphi}
\newcommand{\R}{\mathbb{R}}

\newcommand{\Sph}{\mathbb{S}}

\newcommand{\irN}[1]{\int_{\R^N}{#1}\,dx}
\newcommand{\Jlambda}[2]{\int_{\R^N}|#2|^\lambda\,#1(#2)\,d{#2}}

\newcommand{\pq}{p}

%%%%%%%%%%%%%%%%%%%%%%%%%%%%%%%%%%%%%%%%%%%%%%%%%%%%%%%%%%%%%%%%%%%%%%%%%%
%%%%%%%%%%%%%%%%%%%%%%%%%%%%%%%%%%%%%%%%%%%%%%%%%%%%%%%%%%%%%%%%%%%%%%%%%%
\begin{document}
\title[Reverse HLS --- \version]{Reverse Hardy-Littlewood-Sobolev inequalities}

\author[J.~Dolbeault]{Jean Dolbeault}
\address{\hspace*{-12pt}J.~Dolbeault: CEREMADE (CNRS UMR n$^\circ$ 7534), PSL research university, Universit\'e Paris-Dau\-phine, Place de Lattre de Tassigny, 75775 Paris 16, France}
\email{\href{mailto:dolbeaul@ceremade.dauphine.fr}{dolbeaul@ceremade.dauphine.fr}}

\author[R.~Frank]{Rupert L.~Frank}
\address{\hspace*{-12pt}R.~Frank: Mathematisches Institut, Ludwig-Maximilians Universit\"at M\"unchen, Theresienstr. 39, 80333 M\"unchen, Germany, and Department of Mathematics, California Institute of Technology, Pasadena, CA 91125, USA}
\email{\href{mailto:frank@math.lmu.de}{frank@math.lmu.de}}

\author[F.~Hoffmann]{Franca Hoffmann}
\address{\hspace*{-12pt}F.~Hoffmann: Department of Computing and Mathematical Sciences, California Institute of Technology, 1200 E California Blvd. MC 305-16, Pasadena, CA 91125, USA}
\email{\href{mailto:fkoh@caltech.edu}{fkoh@caltech.edu}}

\begin{abstract} This paper is devoted to a new family of reverse Hardy-Littlewood-Sobolev inequalities which involve a power law kernel with positive exponent. We investigate the range of the admissible parameters and characterize the optimal functions. A striking open question is the possibility of concentration which is analyzed and related with nonlinear diffusion equations involving mean field drifts.\end{abstract}

\keywords{Reverse Hardy-Littlewood-Sobolev inequalities; interpolation; non-linear diffusion; free energy; symmetrization; concentration; minimizer; existence of optimal functions; regularity; Euler--Lagrange equations.}
\subjclass[2010]{Primary: 35A23; Secondary: 26D15, 35K55, 46E35, 49J40.}
\maketitle\thispagestyle{empty}

We are concerned with the following minimization problem. For any $\lambda>0$ and any (measurable) function $\rho\ge0$ on $\R^N$, let
$$
I_\lambda[\rho] := \iint_{\R^N\times\R^N} \rho(x)\,|x-y|^\lambda\,\rho(y)\,dx\,dy \,.
$$
For $0<q<1$ we consider
$$
\mathcal C_{N,\lambda,q} := \inf\left\{ \frac{I_\lambda[\rho]}{\left( \int_{\R^N} \rho(x)\,dx \right)^\alpha\left( \int_{\R^N} \rho(x)^q\,dx \right)^{(2-\alpha)/q}} :\ 0\le\rho\in\mathrm L^1\cap\mathrm L^q(\R^N) \,,\ \rho\not\equiv 0 \right\},
$$
where
$$
\alpha:=\frac{2\,N-q\,(2\,N+\lambda)}{N\,(1-q)}\,.
$$
By convention, for any $p>0$ we use the notation $\rho\in\mathrm L^p(\R^N)$ if $\irN{|\rho(x)|^p}$ is finite. Note that $\alpha$ is determined by scaling and homogeneity: for given values of $\lambda$ and $q$, the value of $\alpha$ is the only one for which there is a chance that the infimum is positive. We are asking whether $\mathcal C_{N,\lambda,q}$ is equal to zero or positive and, in the latter case, whether there is a minimizer. We note that there are three regimes $q<2N/(2N+\lambda)$, $q=2N/(2N+\lambda)$ and $q>2N/(2N+\lambda)$, which respectively correspond to $\alpha>0$, $\alpha=0$ and \hbox{$\alpha<0$}. The case $q=2N/(2N+\lambda)$ has already been dealt with in~\cite{DZ15} by J.~Dou and M.~Zhu, and in~\cite{MR3666824} by Q.A.~Ng\^o and V.H.~Nguyen, who have explicitly computed $\mathcal C_{N,\lambda,q}$ and characterized all solutions of the corresponding Euler--Lagrange equation. In the following we will mostly concentrate on the other cases. Our main result is the following.
%-------------------------------------------------------------------------
\begin{theorem}\label{main} Let $\lambda>0$, $q\in(0,1)$, $N\in \mathbb N^*$ and $\alpha$ as above. Then the inequality
\be{ineq:rHLS}
I_\lambda[\rho]\,\geq
\mathcal C_{N,\lambda,q}
\left( \int_{\R^N} \rho(x)\,dx \right)^\alpha\left( \int_{\R^N} \rho(x)^q\,dx \right)^{(2-\alpha)/q}
\ee
holds for any nonnegative function $\rho\in\mathrm L^1\cap\mathrm L^q(\R^N)$, for some positive constant $\mathcal C_{N,\lambda,q}$, if and only if $q>N/(N+\lambda)$. In this range, if either $N=1$, $2$ or $N\ge3$ and $q\ge \min\big\{1-2/N\,,\, 2N/(2N+\lambda)\big\}$, then the equality case is achieved by a radial nonnegative function $\rho\in\mathrm L^1\cap\mathrm L^q(\R^N)$.\end{theorem}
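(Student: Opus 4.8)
The plan is to establish \eqref{ineq:rHLS} in its sharp range $q>N/(N+\lambda)$, its failure otherwise, and the existence of a radial optimizer, all after reducing to radially symmetric nonincreasing densities.

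\emph{Symmetrization and the inequality.} Writing $|x-y|^\lambda=\lambda\int_0^\infty s^{\lambda-1}\bigl(1-\1_{\ball_s}(x-y)\bigr)\,ds$ and applying the Riesz rearrangement inequality to the radial nonincreasing kernel $\1_{\ball_s}$ together with $\int_{\R^N}\rho\,dx=\int_{\R^N}\rho^*\,dx$, one gets $I_\lambda[\rho]\ge I_\lambda[\rho^*]$ while $\int_{\R^N}\rho^q\,dx$ is unchanged; so it suffices to treat radial nonincreasing $\rho$. The key point is a kernel estimate: for $\rho$ radial, averaging over the sphere of radius $s$ and using that on a fixed hemisphere $|x-s\omega|^2\ge|x|^2+s^2\ge|x|^2$, one obtains $\int_{\R^N}|x-y|^\lambda\rho(y)\,dy\ge\tfrac12\,|x|^\lambda\bigl(\int_{\R^N}\rho\,dy\bigr)$, hence $I_\lambda[\rho]\ge\tfrac12\bigl(\int_{\R^N}\rho\,dx\bigr)\bigl(\int_{\R^N}|x|^\lambda\rho\,dx\bigr)$. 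It then remains to prove the weighted Gagliardo--Nirenberg-type inequality $\bigl(\int\rho\,dx\bigr)^{1-\alpha}\int|x|^\lambda\rho\,dx\ge c\bigl(\int\rho^q\,dx\bigr)^{(2-\alpha)/q}$, which I would do by splitting $\int\rho^q\,dx=\int_{\ball_R}\rho^q+\int_{\R^N\setminus\ball_R}\rho^q$, estimating the first piece by Hölder by $\bigl(\int\rho\bigr)^q|\ball_R|^{1-q}$ and the second, again by Hölder, by $\bigl(\int_{\R^N\setminus\ball_R}|x|^{-\lambda q/(1-q)}\,dx\bigr)^{1-q}\bigl(\int|x|^\lambda\rho\bigr)^{q}$ — the last integral being finite exactly when $q>N/(N+\lambda)$ — and optimizing over $R$; the exponents come out right because they are fixed by scaling. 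This gives $\mathcal C_{N,\lambda,q}>0$.

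\emph{Sharpness.} For $q\le N/(N+\lambda)$ I would show $\mathcal C_{N,\lambda,q}=0$ using the test family $\rho_R(x)=|x|^{-\gamma}\1_{\{1<|x|<R\}}$, with $\gamma\in(N+\lambda,N/q)$ when $q<N/(N+\lambda)$ and $\gamma=N+\lambda$ when $q=N/(N+\lambda)$: one checks that $\int\rho_R\,dx$ stays between two positive constants, that $I_\lambda[\rho_R]$ stays bounded (resp.\ is $O(\log R)$) by the crude bound $|x-y|^\lambda\le2^\lambda\max\{|x|,|y|\}^\lambda$, while $\int\rho_R^q\,dx\to\infty$; since $(2-\alpha)/q>0$ (indeed $2-\alpha=\lambda q/(N(1-q))$), the quotient tends to $0$.

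\emph{Existence.} Take a minimizing sequence, symmetrize, and normalize by the scaling $\rho\mapsto c\,\rho(\cdot/\ell)$ so that $\int\rho_n\,dx=\int\rho_n^q\,dx=1$. From the kernel estimate, $\int|x|^\lambda\rho_n\,dx$ is bounded, giving tightness of $\{\rho_n\}$; the Hölder splitting gives tightness of $\{\rho_n^q\}$; the bound $\rho_n(x)\le|\ball_{|x|}|^{-1}$ (radial monotonicity and $\int\rho_n\,dx=1$) gives $\int_{\ball_\delta}\rho_n^q\,dx=O(\delta^{N(1-q)})$ uniformly; with Helly's selection ($\rho_n\to\rho_\infty$ a.e., $\rho_\infty$ radial nonincreasing) this yields $\rho_n\to\rho_\infty$ in $\mathrm L^q(\R^N)$, so $\int\rho_\infty^q\,dx=1$ and $\rho_\infty\not\equiv0$, while $\rho_n\rightharpoonup\rho_\infty+m\,\delta_0$ as measures with $\int\rho_\infty\,dx=1-m$, $m\in[0,1)$. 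Lower semicontinuity of $\mu\mapsto\iint|x-y|^\lambda\,d\mu\,d\mu$ along this convergence together with \eqref{ineq:rHLS} applied to $\rho_\infty$ give $\mathcal C_{N,\lambda,q}\ge\mathcal C_{N,\lambda,q}(1-m)^\alpha+2m\int_{\R^N}|y|^\lambda\rho_\infty\,dy$. When $\alpha\le0$, i.e.\ $q\ge2N/(2N+\lambda)$, the right-hand side exceeds $\mathcal C_{N,\lambda,q}$ unless $m=0$, so $\rho_\infty$ is an optimizer (the boundary value $q=2N/(2N+\lambda)$ being in any case already treated in~\cite{DZ15,MR3666824}). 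The remaining regime is $0<\alpha<1$, i.e.\ $N/(N+\lambda)<q<2N/(2N+\lambda)$, which for $N=1,2$ carries no obstruction since there $\min\{1-2/N,2N/(2N+\lambda)\}\le0<q$; for $N\ge3$ one must exclude a residual Dirac mass at the origin, and combining the displayed inequality with the sharp constant $\mathcal K$ of the weighted inequality reduces this to a quantitative bound of the form $\alpha\,\mathcal C_{N,\lambda,q}<2\,\mathcal K$. Proving this in the stated range — where the threshold $q\ge1-2/N$ (equivalently $N(1-q)\le2$), familiar from fast-diffusion equations, should enter, presumably through the link with the mean-field nonlinear diffusion flow or through a sharper form of the kernel/weighted estimates — is the step I expect to be the main obstacle.
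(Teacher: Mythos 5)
Your treatment of the ``if and only if'' part is sound and close to the paper's: positivity for $q>N/(N+\lambda)$ via symmetrization, the pointwise bound $\int_{\R^N}|x-y|^\lambda\rho(y)\,dy\ge c\,|x|^\lambda\int_{\R^N}\rho\,dy$ for radial non-increasing $\rho$, and the H\"older splitting of $\int\rho^q\,dx$ over $\{|x|<R\}$ and $\{|x|\ge R\}$ is exactly the paper's Lemma~\ref{ineq2}. Your test family $|x|^{-\gamma}\,\1_{\{1<|x|<R\}}$ for the failure at $q\le N/(N+\lambda)$ is a valid (and slightly more unified) alternative to the paper's concentrating-bump argument with $M\to\infty$ combined with the borderline family $|x|^{-(N+\lambda)}\1_{\{1\le|x|\le R\}}$. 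The compactness argument, the dichotomy $\mu=\rho_\infty\,dx+m\,\delta_0$, and the conclusion $m=0$ when $\alpha\le0$ also match the paper's scheme (Propositions~\ref{opt0} and~\ref{opt1}).

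The genuine gap is the exclusion of the Dirac mass when $0<\alpha<1$, i.e.\ $N/(N+\lambda)<q<2N/(2N+\lambda)$, and this is precisely the substance of the existence assertion, including for $N=1,2$. Your remark that $N=1,2$ ``carries no obstruction'' is not correct: the hypothesis $q\ge\min\{1-2/N,2N/(2N+\lambda)\}$ is vacuous there, but the possible concentration still has to be ruled out by an argument. Moreover, the route you sketch --- a global comparison of constants of the form $\alpha\,\mathcal C_{N,\lambda,q}<2\,\mathcal K$ --- corresponds to the paper's Propositions~\ref{prop:cases} and~\ref{Cor:Mstar} and provably covers only the strictly smaller region $q>\bar q(\lambda,N)$; it cannot yield the theorem as stated. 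The paper's actual mechanism is local: if the relaxed minimizer has $M_*>0$, replace a portion $\mu$ of the Dirac mass by the bump $\epsilon^{-N}\mu\,\sigma(\cdot/\epsilon)$. A Br\'ezis--Lieb expansion (Lemma~\ref{bl}) shows that $\int\rho^q\,dx$ increases by $\epsilon^{N(1-q)}\mu^q\int\sigma^q\,dx+o\big(\epsilon^{N(1-q)}\mu^q\big)$ --- a large gain because $q<1$ --- while a Taylor expansion of $|x-y|^\lambda-|x|^\lambda$ (whose first-order term vanishes by the radial symmetry of $\rho_*$) shows that the numerator changes only by $O\big(\epsilon^{\beta}\mu\big)$ with $\beta=\min\{2,\lambda\}$, the mass and $\mathrm L^1$ terms being unchanged. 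The condition $\beta\ge N(1-q)$ under which the gain wins is exactly where the hypothesis $q\ge1-2/N$ enters for $N\ge3$, $\lambda>2N/(N-2)$, and it holds automatically for $N=1,2$ and for $\lambda\le 2N/(N-2)$ because $q>N/(N+\lambda)$ then forces $\min\{2,\lambda\}>N(1-q)$. Without this perturbation step your argument establishes the existence claim only for $q\ge 2N/(2N+\lambda)$.
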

%-------------------------------------------------------------------------
This theorem provides a necessary and sufficient condition for the validity of the inequality, namely $q>N/(N+\lambda)$ or equivalently $\alpha<1$. Concerning the existence of an optimizer, the theorem completely answers this question in dimensions $N=1$ and $N=2$. In dimensions $N\geq 3$ we obtain a sufficient condition for the existence of an optimizer, namely, $q\geq \min\big\{1-2/N,2N/(2N+\lambda)\big\}$. We emphasize that this is not a necessary condition and, in fact, in Proposition~\ref{Cor:Mstar} we prove existence in a slightly larger, but less explicit region. However, in the \emph{whole} region $q>N/(N+\lambda)$ we are able to prove the existence of an optimizer for a relaxed problem, with same optimal constant $\mathcal C_{N,\lambda,q}$, which allows for an additional Dirac mass at a single point. Therefore the question about existence of an optimizer in Theorem~\ref{main} is reduced to the much simpler, but still not obvious problem of whether the optimizer for this relaxed problem in fact has a Dirac mass. Fig.~\ref{Fig1} summarizes these considerations.

\medskip The Hardy-Littlewood-Sobolev (HLS) inequality is named after G.~Hardy and J.E.~Littlewood, see~\cite{MR1544927,MR1574995}, and S.L.~Sobolev, see~\cite{zbMATH03035784,sobolev1963theorem} (also see~\cite{MR0046395} for an early discussion of rearrangement methods applied to these inequalities). In 1983, E.H.~Lieb in~\cite{MR717827} proved the existence of optimal functions for negative values of $\lambda$ and established optimal constants. His proof requires an analysis of the invariances which has been systematized under the name of \emph{competing symmetries}, see~\cite{MR1038450}. A comprehensive introduction can be found in~\cite{MR1817225,burchard2009short}. Notice that rearrangement free proofs, which in some cases rely on the duality between Sobolev and HLS inequalities, have also been established more recently in various cases: see for instance~\cite{MR2659680,MR2925386,2014arXiv1404.1028J}. Standard HLS inequalities, which correspond to negative values of $\lambda$ in $I_\lambda[\rho]$, have many consequences in the theory of functional inequalities, particularly when the point comes to the identification of the optimal constants.

Relatively few results are known in the case $\lambda>0$. The conformally invariant case, \emph{i.e.}, $q=2N/(2N+\lambda)$, appears in~\cite{DZ15} and is motivated by some earlier results on the sphere (see references therein). Further results have been obtained in~\cite{MR3666824}, which still correspond to the conformally invariant case. Another range of exponents, which has no intersection with the one considered in the present paper, was studied earlier in~\cite[Theorem~G]{MR0121579}. Here we focus on a non conformally invariant family of interpolation inequalities corresponding to a given $\mathrm L^1(\R^N)$ norm. In a sense, these inequalities play for HLS inequalities a role analogous to Gagliardo-Nirenberg inequalities compared to Sobolev's conformally invariant inequality. 

\medskip Our study of~\eqref{ineq:rHLS} is motivated by the study of the nonnegative solutions of the evolution equation
\be{FP}
\frac{\partial\rho}{\partial t}=\Delta\rho^q+\,\nabla\cdot\(\rho\,\nabla W_\lambda\ast\rho\)
\ee
where the kernel is $W_\lambda(x):=\tfrac1\lambda\,|x|^\lambda$. Optimal functions for~\eqref{ineq:rHLS} provide energy minimizers for the \emph{free energy} functional
$$
\mathcal F[\rho]:=\frac12\int_{\R^N}\rho\,(W_\lambda\ast\rho)\,dx-\frac{1}{1-q}\int_{\R^N}\rho^q\,dx
=\frac{1}{2\lambda}\,I_\lambda[\rho]-\frac{1}{1-q}\int_{\R^N}\rho^q\,dx
$$ 
under a \emph{mass} constraint $M=\int_{\R^N}\rho\,dx$. It is indeed an elementary computation to check that a smooth solution $\rho(t,\cdot)$ of~\eqref{FP} with sufficient decay properties as $|x|\to+\infty$ is such that $M=\int_{\R^N}\rho(t,x)\,dx$ does not depend on $t$ while the free energy decays according to
$$
\frac d{dt}\mathcal F[\rho(t,\cdot)]:=-\int_{\R^N}\rho\left|\tfrac q{1-q}\nabla\rho^{q-1}-\nabla W_\lambda\ast\rho\right|^2\,dx\,.
$$
This identity allows us to identify the smooth stationary solutions as the solutions of
$$
\rho_s(x)=\(\mu+(W_\lambda\ast\rho_s)(x)\)^{-\frac1{1-q}}\,\quad\forall\,x\in\R^N\,,
$$
where $\mu$ is a constant which has to be determined by the mass constraint and observe that smooth minimizers of $\mathcal F$, whenever they exist, are stationary solutions.
%-------------------------------------------------------------------------
\begin{corollary}\label{Cor:FreeEnergy} With the notations of Theorem~\ref{main}, $\mathcal F[\rho]$ is bounded from below on the set of nonnegative functions $\rho\in\mathrm L^1\cap\mathrm L^q(\R^N)$ if and only if $q>N/(N+\lambda)$, and in the range of parameters for which equality is achieved in~\eqref{ineq:rHLS}, if $\rho_*$ is optimal for~\eqref{ineq:rHLS}, then $\mathcal F[\rho]\ge\mathcal F[\rho_*]$ up to a scaling of $\rho_*$, if $\rho$ and $\rho_*$ have same mass.\end{corollary}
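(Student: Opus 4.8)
The plan is to rewrite $\mathcal F$ in terms of the two natural invariants $M:=\int_{\R^N}\rho\,dx$ and $T:=\int_{\R^N}\rho^q\,dx$ together with the scale‑invariant quotient $Q[\rho]:=I_\lambda[\rho]\big/\big(M^\alpha\,T^{(2-\alpha)/q}\big)$, which by the very definition of $\mathcal C_{N,\lambda,q}$ satisfies $Q[\rho]\ge\mathcal C_{N,\lambda,q}$, with equality exactly when $\rho$ is optimal in~\eqref{ineq:rHLS}. Writing $p:=(2-\alpha)/q$, one has the exact identity
$$
\mathcal F[\rho]=\frac1{2\lambda}\,Q[\rho]\,M^\alpha\,T^p-\frac1{1-q}\,T\,.
$$
A first elementary remark is that $q>N/(N+\lambda)$ is the same as $\alpha<1$, which since $0<q<1$ forces $2-\alpha>1>q$ and hence $p>1$; by Theorem~\ref{main} (and because $\mathcal C_{N,\lambda,q}$ is the infimum of $Q$) this is also precisely the range in which $\mathcal C_{N,\lambda,q}>0$. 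Throughout, ``bounded from below'' is understood for a fixed value of the mass $M$, as is natural in view of the evolution problem~\eqref{FP}.

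For the ``if'' direction, fix $M>0$. The identity above and $Q[\rho]\ge\mathcal C_{N,\lambda,q}>0$ give $\mathcal F[\rho]\ge h_M(T)$ with $h_M(t):=\tfrac{\mathcal C_{N,\lambda,q}}{2\lambda}\,M^\alpha\,t^p-\tfrac1{1-q}\,t$. Since $p>1$, the one‑variable function $h_M$ is bounded below on $(0,\infty)$ and attains its minimum at the unique critical point $t_*(M)=\big(\tfrac{2\lambda}{p\,(1-q)\,\mathcal C_{N,\lambda,q}}\big)^{1/(p-1)}M^{-\alpha/(p-1)}$, with $h_M(t_*(M))<0$; hence $\mathcal F[\rho]\ge h_M(t_*(M))>-\infty$ for every admissible $\rho$ of mass $M$.

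For the ``only if'' direction, suppose $q\le N/(N+\lambda)$, so $\alpha\ge1$ and, by Theorem~\ref{main}, $\mathcal C_{N,\lambda,q}=0$; pick nonnegative $\rho_n$ with $Q[\rho_n]\to0$. The mechanism is that the rescalings $\rho\mapsto\mu\,\rho(\cdot/\ell)$ leave $Q$ unchanged, while imposing $\mu\,\ell^N=M/\!\int_{\R^N}\rho\,dx$ keeps the mass equal to $M$ and turns $T$ into $\mu^{q-1}T$; since $q-1<0$ this lets $T$ run over all of $(0,\infty)$. Thus for every $\eps>0$ and every $t>0$ there is an admissible $\rho$ of mass $M$ with $Q[\rho]<\eps$ and $\int_{\R^N}\rho^q\,dx=t$, so $\mathcal F[\rho]<\tfrac{\eps}{2\lambda}M^\alpha t^p-\tfrac1{1-q}t$. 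Since $\inf_{t>0}\big(\tfrac{\eps}{2\lambda}M^\alpha t^p-\tfrac1{1-q}t\big)\to-\infty$ as $\eps\to0^+$ (the only case actually needing the limit being $p>1$, where this infimum equals a negative constant times $\eps^{-1/(p-1)}$, while for $p\le1$ it is $-\infty$ for every small $\eps$), it follows that $\mathcal F$ is unbounded below on the mass‑$M$ slice.

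Finally, suppose we are in a range where~\eqref{ineq:rHLS} has an optimizer $\rho_*$, and let $M:=\int_{\R^N}\rho_*\,dx=\int_{\R^N}\rho\,dx$. A mass‑preserving rescaling as above replaces $\rho_*$ by a rescaled copy $\tilde\rho_*$ that still has mass $M$, still saturates~\eqref{ineq:rHLS} (so $Q[\tilde\rho_*]=\mathcal C_{N,\lambda,q}$), and in addition satisfies $\int_{\R^N}\tilde\rho_*^{\,q}\,dx=t_*(M)$; then $\mathcal F[\tilde\rho_*]=h_M(t_*(M))$ is exactly the lower bound obtained in the ``if'' part, whence $\mathcal F[\rho]\ge\mathcal F[\tilde\rho_*]$ for all $\rho$ of the same mass. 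The one mildly delicate point is the assertion that at fixed mass $\int_{\R^N}\rho^q\,dx$ can be prescribed arbitrarily by a dilation, but this follows from the scaling $\int\rho^q\mapsto\mu^{q-1}\int\rho^q$ noted above; everything else reduces to the elementary calculus of $t\mapsto a\,t^p-b\,t$ on $(0,\infty)$ together with the qualitative input of Theorem~\ref{main} that $\mathcal C_{N,\lambda,q}$ is positive exactly when $q>N/(N+\lambda)$.
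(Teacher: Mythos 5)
Your argument is correct and is essentially the paper's proof in a different parameterization: where the paper minimizes $\tau\mapsto\mathcal F[\tau^N\rho(\tau\cdot)]$ at the optimal dilation $\tau_0$ and then invokes~\eqref{ineq:rHLS}, you first invoke the inequality and then minimize the resulting one-variable function $t\mapsto \tfrac{\mathcal C_{N,\lambda,q}}{2\lambda}M^\alpha t^p-\tfrac{1}{1-q}\,t$ over $t=\int\rho^q\,dx$ — the two are equivalent because the mass-preserving dilation sweeps out all values of $t$ while fixing the quotient $Q$, and both yield the same bound $-\,\kappa\,(\mathcal C_{N,\lambda,q}M^\alpha)^{-N(1-q)/(\lambda-N(1-q))}$. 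Your reading of ``bounded from below'' as being at fixed mass matches the paper's intent (its bound depends on $M$ and, for $\alpha>0$, degenerates as $M\to0$), and your explicit treatment of the degenerate case $p\le1$ in the ``only if'' direction is a small completeness gain over the paper's terse version.
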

%-------------------------------------------------------------------------
It is obvious that Eq.~\eqref{FP} is \emph{translation} invariant and this is also a natural property of~$\mathcal F[\rho]$. Concerning \emph{scaling} properties, we introduce $\rho_\tau(x):=\tau^N\rho(\tau\,x)$ for an arbitrary $\tau>0$ and observe that
$$
\tau\mapsto\mathcal F[\rho_\tau]=\frac{\tau^{-\lambda}}{2\lambda}\,I_\lambda[\rho]-\frac{\tau^{-N\,(1-q)}}{1-q}\irN{\rho^q}
$$
reaches a minimum at $\tau=\tau_0$ such that $\tau_0^{\lambda-N\,(1-q)}=\frac{I_\lambda[\rho]}{2N\,\,\irN{\rho^q}}$. As a consequence, we have that
$$
\mathcal F[\rho]\ge\mathcal F[\rho_{\tau_0}]=-\,\kappa\,\frac{\(\irN{\rho^q}\)^\frac\lambda{\lambda-N\,(1-q)}}{I_\lambda[\rho]^\frac{N\,(1-q)}{\lambda-N\,(1-q)}}\ge-\,\kappa\(\mathcal C_{N,\lambda,q}\, M^\alpha \)^{-\frac{N\,(1-q)}{\lambda-N\,(1-q)}}
$$
where $\kappa:=\(\tfrac1{1-q}-\tfrac{N}{\lambda}\)\(2N\)^\frac{N(1-q)}{\lambda-N\,(1-q)}>0$, which completes the proof of Corollary~\ref{Cor:FreeEnergy}.

Eq.~\eqref{FP} deserves some comments. It is a \emph{mean field} equation in the overdamped regime, in which the \emph{drift term} is an average of a spring force $\nabla W_\lambda(x)$ for any $\lambda>0$. The case $\lambda=2$ corresponds to linear springs obeying Hooke's law, while large $\lambda$ reflect a force which is small at small distances, but becomes very large for large values of $|x|$. In this sense, it is a \emph{strongly confining} force term. By expanding the diffusion term as $\Delta\rho^q=q\,\rho^{q-1}\(\Delta\rho+(q-1)\,\rho^{-1}\,|\nabla\rho|^2\)$ and considering $\rho^{q-1}$ as a diffusion coefficient, it is obvious that this \emph{fast diffusion} coefficient is large for small values of $\rho$ and has to be balanced by a very large drift term to avoid a \emph{runaway} phenomenon in which no stationary solutions may exist in $\mathrm L^1(\R^N)$. In the case of a drift term with linear growth as $|x|\to+\infty$, it is well known that the threshold is given by the exponent $q=1-2/N$ and it is also known according to, \emph{e.g.},~\cite{MR797051} for the pure fast diffusion case (no drift) that $q=1-2/N$ is the threshold for the global existence of nonnegative solutions in $\mathrm L^1(\R^N)$, with constant mass.

In the regime $q<1-2/N$, a new phenomenon appears which is not present in linear diffusions. As emphasized in~\cite{MR2282669}, the diffusion coefficient $\rho^{q-1}$ becomes small for large values of $\rho$ and does not prevent the appearance of singularities. This is one of the major issues investigated in this paper. Before giving details, let us observe that $W_\lambda$ is a convolution kernel which averages the solution and can be expected to give rise to a smooth effective potential term $V_\lambda=W_\lambda\ast\rho$ at $x=0$ if we consider a radial function~$\rho$. This is why we expect that $V_\lambda(x)=V_\lambda(0)+O\(|x|^2\)$ for $|x|$ small. With these considerations at hand, let us illustrate these ideas with a simpler model involving only a given, external potential~$V$. Assume that $u$ solves the \emph{fast diffusion with external drift} given by
$$
\frac{\partial u}{\partial t}=\Delta u^q+\,\nabla\cdot\big(u\,\nabla V\big)\,.
$$
To fix ideas, we shall take $V(x)=1+\frac12\,|x|^2+\frac1\lambda\,|x|^\lambda$, which is expected to capture the behavior of the potential $W_\lambda\ast\rho$ at $x=0$ and as $|x|\to+\infty$ when $\lambda\ge2$. Such an equation admits a free energy functional
$$
u\mapsto\irN{V\,u}-\frac1{1-q}\irN{u^q}\,,
$$
whose minimizers are, under the mass constraint $M=\irN u$, given by
$$
u_\mu(x)=\(\mu+V(x)\)^{-\frac1{1-q}}\,\quad\forall\,x\in\R^N\,.
$$
A linear spring would simply correspond to a fast diffusion Fokker--Planck equation when $V(x)=|x|^2/2$. One can for instance refer to~\cite{MR3497125} for a general account on this topic. In that case, it is straightforward to observe that the so-called \emph{Barenblatt profile} $u_\mu$ has finite mass if and only if $q>1-2/N$. For a general parameter $\lambda\ge2$, the corresponding integrability condition for $u_\mu$ is $q>1-\lambda/N$. But $q=1-2/N$ is also a threshold value for the regularity. Let us assume that $\lambda>2$ and $1-\lambda/N<q<1-2/N$, and consider the stationary solution~$u_\mu$, which depends on the parameter $\mu$. The mass of $u_\mu$ can be computed for any $\mu\ge-1$ by inverting the monotone decreasing function of $\mu\mapsto M(\mu)$ defined by
$$
M(\mu):=\irN{\(\mu+V(x)\)^{-\frac1{1-q}}}\le M(-1)=\irN{\(\tfrac12\,|x|^2+\tfrac1\lambda\,|x|^\lambda\)^{-\frac1{1-q}}}\,.
$$
Now, if one tries to minimize the free energy under the mass contraint $\irN u=M$ for an arbitrary $M>M(-1)$, it is left to the reader to check that the limit of a minimizing sequence is simply the measure $\big(M-M(-1)\big)\,\delta+u_{-1}$ where $\delta$ denotes the Dirac distribution centered at $x=0$. For the model described by Eq.~\eqref{FP}, the situation is by far more complicated because the mean field potential $V_\lambda=W_\lambda\ast\rho$ depends on the regular part $\rho$ and we have no simple estimate on a critical mass as in the case of an external potential~$V$.

Eq.~\eqref{FP} is a special case of a larger family of Keller-Segel type equations, which covers the cases with $q=1$ (linear diffusions) and $q\ge1$ (diffusions of porous medium type), and also the range of exponents $\lambda<0$. Of particular interest is the original parabolic--elliptic Keller--Segel system which corresponds in dimension $N=2$ to a limit case as $\lambda\to0$, in which the kernel is $W_0(x)=\frac1{2\pi}\,\log|x|$ and the diffusion exponent is $q=1$. The reader is invited to refer to~\cite{hoffmann2017keller} for a global overview of this class of problems and for a detailed list of references and applications. A research project related with the present paper,~\cite{CaDe2018}, focuses on uniqueness, gradient flows and the role of the free energy for the evolution equation~\eqref{FP} using a dyadic decomposition, and nicely complements the results presented here.

%%%%%%%%%%%%%%%%%%%%%%%%%%%%%%%%%%%%%%%%%%%%%%%%%%%%%%%%%%%%%%%%%%%%%%%%%%
%%%%%%%%%%%%%%%%%%%%%%%%%%%%%%%%%%%%%%%%%%%%%%%%%%%%%%%%%%%%%%%%%%%%%%%%%%
\section{Validity of the inequality}\label{Sec:Validity}

The following proposition gives a necessary and sufficient condition for inequality~\eqref{ineq:rHLS}.
%-------------------------------------------------------------------------
\begin{proposition}\label{ineq}
Let $\lambda>0$.
\begin{enumerate}
\item If $0<q\le N/(N+\lambda)$, then $\mathcal C_{N,\lambda,q}=0$.
\item If $N/(N+\lambda)<q<1$, then $\mathcal C_{N,\lambda,q}>0$.
\end{enumerate}
\end{proposition}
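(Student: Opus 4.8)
The plan is to treat the two claims by exhibiting test functions for the vanishing statement and by a two-sided estimate for the positivity statement. For part (1), the heuristic is that when $\alpha$ is large and positive (which happens for small $q$), the $\mathrm L^1$ norm enters with a large power in the denominator, so we can make the quotient arbitrarily small by choosing functions whose mass is large relative to the other two quantities. Concretely, I would take $\rho = \chi_R$, the characteristic function of a ball of radius $R$ (or a smooth bump supported there), possibly multiplied by a small height parameter, and compute the scaling of each factor: $\int \rho\, dx \sim R^N$, $\int \rho^q\, dx \sim R^N$, and $I_\lambda[\rho] \sim R^{2N+\lambda}$. Then the quotient scales like $R^{2N+\lambda}/(R^{N\alpha} R^{N(2-\alpha)/q})$, and a direct computation of the exponent, using the definition of $\alpha$, shows that the exponent of $R$ is a negative multiple of $(N+\lambda)q - N$; hence when $q < N/(N+\lambda)$ the quotient tends to $0$ as $R\to\infty$. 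The borderline case $q = N/(N+\lambda)$, i.e.\ $\alpha = 1$, requires a slightly more careful test family: here one superimposes two bumps — one of fixed size and one of radius $R\to 0$ carrying a fixed small mass — or dilates a single profile, and checks that the quotient still degenerates; this is the case where the inequality \emph{just} fails and is the first of the delicate points.

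For part (2), I need a genuine lower bound on $I_\lambda[\rho]$ in terms of $\|\rho\|_1$ and $\|\rho\|_q$. The key idea is that $|x-y|^\lambda$ is bounded below away from the diagonal, and more usefully that one can split $\R^N\times\R^N$ according to whether $|x-y|$ is large or small. I would first reduce by scaling and by translation (centering, say, the center of mass or a median point) so that a fixed fraction of the mass lies in a ball $B_r$ of some radius $r$ to be chosen. On the region where $|x-y| \ge 1$ one gets $I_\lambda[\rho] \ge \iint_{|x-y|\ge 1}\rho(x)\rho(y)\,dx\,dy$, which is comparable to $\|\rho\|_1^2$ minus the contribution of the near-diagonal region $|x-y|<1$; the latter is controlled by $\iint_{|x-y|<1}\rho(x)\rho(y)\,dx\,dy \le \|\rho\|_1 \sup_z \int_{B_1(z)}\rho \le C\|\rho\|_1 \|\rho\|_q^{?}\cdot(\text{volume factor})$ via Hölder on a unit ball, i.e.\ $\int_{B_1(z)}\rho \le |B_1|^{1-1/p}\|\rho\|_{L^p(B_1(z))}$ — but $q<1$ makes the naive Hölder go the wrong way, so instead I would use the elementary inequality that for $\rho$ supported where it is large, $\rho \le \rho^q (\sup\rho)^{1-q}$, combined with an interpolation $\|\rho\|_1 \le \|\rho\|_q^{\theta}\|\rho\|_\infty^{1-\theta}$-type bound after truncating at level sets. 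The cleanest route: write $\rho = \rho\,\1_{\{\rho \le h\}} + \rho\,\1_{\{\rho>h\}}$ for a threshold $h$; the low part has $\int \rho\,\1_{\{\rho\le h\}} \le h^{1-q}\int\rho^q$, and the high part is where $\rho$ is concentrated, so its support has small measure $\le h^{-q}\int\rho^q$, forcing points of the high part to be spread out or else $|x-y|^\lambda$ picks up mass on a large scale. Optimizing $h$ against the scaling exponent $\alpha$ produces exactly the power $(\int\rho)^\alpha(\int\rho^q)^{(2-\alpha)/q}$, and one finds a strictly positive constant precisely when $q > N/(N+\lambda)$.

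I expect the main obstacle to be part (2), specifically making the dichotomy (near-diagonal versus far-field contributions to $I_\lambda$) quantitatively sharp enough that the resulting lower bound has the \emph{correct homogeneity} matching $(\int\rho)^\alpha (\int\rho^q)^{(2-\alpha)/q}$ — this is forced by the scaling remark in the introduction, but getting a clean constant requires choosing the truncation level $h$ and the length scale $r$ in terms of $\|\rho\|_1$ and $\|\rho\|_q$ in a coordinated way, and then carefully tracking that the exponent of the "bad" term is dominated by the good one exactly in the regime $q>N/(N+\lambda)$. An alternative that may be cleaner is to use the layer-cake / rearrangement: by the Riesz rearrangement inequality $I_\lambda$ does \emph{not} decrease under symmetric decreasing rearrangement (since $|x-y|^\lambda$ is increasing, not decreasing), so one cannot directly symmetrize to reduce to radial $\rho$ — this sign issue is itself a subtlety worth flagging, and it is why the argument is run through explicit region decompositions rather than symmetrization. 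I would therefore present part (2) via the truncation-and-scaling estimate above and relegate the borderline test function in part (1) to a short explicit computation.
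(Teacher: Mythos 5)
Your plan for Part (1) has a fatal flaw in the main case $q<N/(N+\lambda)$: the quotient $I_\lambda[\rho]\big/\big((\int\rho)^\alpha(\int\rho^q)^{(2-\alpha)/q}\big)$ is \emph{exactly} invariant both under dilations $\rho\mapsto\rho(\cdot/R)$ and under multiplication by constants --- this is precisely how $\alpha$ is defined (``determined by scaling and homogeneity''). For $\rho=\chi_{B_R}$ the exponent of $R$ you compute is $2N+\lambda-N\alpha-N(2-\alpha)/q$, which is identically $0$, not a negative multiple of $(N+\lambda)q-N$; so balls of growing radius (with or without a height factor) give a \emph{constant} quotient and prove nothing. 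Your heuristic --- add mass without paying in $\int\rho^q$ or in $I_\lambda$ --- is correct, but it forces a genuinely two-scale trial function. The paper takes $\rho_\epsilon=\rho+M\,\epsilon^{-N}\sigma(\cdot/\epsilon)$ with $\rho$ fixed: as $\epsilon\to0_+$ the concentrating bump contributes $M$ to the mass, nothing to $\int\rho^q$, and only $2M\int|x|^\lambda\rho\,dx$ to $I_\lambda$, so letting $M\to\infty$ kills the quotient whenever $\alpha>1$. You mention a two-bump construction only for the borderline case and do not carry it out; note that at $q=N/(N+\lambda)$ (i.e.\ $\alpha=1$) even this is not enough, and one needs in addition the family $\rho_R=|x|^{-(N+\lambda)}\,\mathbbm 1_{\{1\le|x|\le R\}}$, for which $\int|x|^\lambda\rho_R\,dx=\int\rho_R^q\,dx=|\Sph^{N-1}|\log R$ and the residual quotient decays like $(\log R)^{-\lambda/N}$.

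For Part (2), your reason for avoiding symmetrization is backwards. For the radially \emph{increasing} kernel $|x-y|^\lambda$, symmetric decreasing rearrangement \emph{decreases} $I_\lambda$ (write $|z|^\lambda=R^\lambda-(R^\lambda-|z|^\lambda)_+$ and apply Riesz to the subtracted, symmetric decreasing part, then let $R\to\infty$), while it leaves the denominator unchanged; since one wants a \emph{lower} bound on $I_\lambda$, this is exactly the favorable direction, and the paper does reduce to symmetric non-increasing $\rho$. For such $\rho$ one has $\int|x-y|^\lambda\rho(y)\,dy\ge\int|y|^\lambda\rho(y)\,dy$ for every $x$, hence $I_\lambda[\rho]\ge\big(\int\rho\,dx\big)\big(\int|x|^\lambda\rho\,dx\big)$, and the whole problem collapses to the elementary interpolation $\big(\int\rho\,dx\big)^{1-\theta}\big(\int|x|^\lambda\rho\,dx\big)^{\theta}\ge c\,\big(\int\rho^q\,dx\big)^{1/q}$ with $\theta=N(1-q)/(\lambda q)$, proved by splitting $\int\rho^q$ over $\{|x|<R\}$ and $\{|x|\ge R\}$, applying H\"older with exponent $1/q>1$ (so it goes the right way), and optimizing over $R$; the hypothesis $q>N/(N+\lambda)$ enters exactly as the integrability of $|x|^{-\lambda q/(1-q)}$ at infinity. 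Your near/far-diagonal decomposition is only a sketch: the intermediate bound $I_\lambda\ge\|\rho\|_1^2-\iint_{|x-y|<1}\rho\,\rho$ has the wrong homogeneity, and the decisive step (``optimizing $h$ produces exactly the right power'') is asserted rather than performed, so as written it does not constitute a proof.
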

%-------------------------------------------------------------------------
The result in~(1) for $q<N/(N+\lambda)$ is obtained in~\cite{2018arXiv180301915C} using a different method. The result in~(1) for $q=N/(N+\lambda)$, as well as the result in~(2) is new.
\begin{proof}[Proof of Proposition~\ref{ineq}. Part (1).]
Let $\rho\ge0$ be bounded with compact support and let $\sigma\ge0$ be a smooth function with $\int_{\R^N}\sigma(x)\,dx =1$. With another parameter $M>0$ we consider
$$
\rho_\epsilon(x) = \rho(x) + M\,\epsilon^{-N}\,\sigma(x/\epsilon) \,.
$$
Then $\int_{\R^N} \rho_\epsilon(x)\,dx = \int_{\R^N} \rho(x)\,dx + M$ and, by simple estimates,
\begin{equation}
\label{eq:limitq}
\int_{\R^N} \rho_\epsilon(x)^q\,dx \to \int_{\R^N} \rho(x)^q\,dx
\quad\text{as}\quad\epsilon\to 0_+
\end{equation}
and
$$
I_\lambda[\rho_\epsilon] \to I_\lambda[\rho] + 2M\Jlambda\rho x
\quad\text{as}\quad\epsilon\to 0_+ \,.
$$
Thus, taking $\rho_\epsilon$ as a trial function,
\begin{equation}
\label{eq:functionalplusdelta}
\mathcal C_{N,\lambda,q}\le\frac{I_\lambda[\rho] + 2M\Jlambda\rho x}{\left( \int_{\R^N} \rho(x)\,dx + M \right)^\alpha\left( \int_{\R^N} \rho(x)^q\,dx \right)^{(2-\alpha)/q} }=:\mathcal Q[\rho,M]\,.
\end{equation}
This inequality is valid for any $M$ and therefore we can let $M\to+\infty$. If $\alpha>1$, which is the same as $q<N/(N+\lambda)$, we immediately obtain $\mathcal C_{N,\lambda,q}=0$ by letting $M\to+\infty$. If $\alpha=1$, \emph{i.e.}, $q=N/(N+\lambda)$, by taking the limit as $M\to+\infty$, we obtain
$$
\mathcal C_{N,\lambda,q}\le\frac{2\Jlambda\rho x}{\left( \int_{\R^N} \rho(x)^q\,dx \right)^{(2-\alpha)/q} } \,.
$$
Let us show that by a suitable choice of $\rho$ the right side can be made arbitrarily small. For any $R>1$, we take
$$
\rho_R(x):=|x|^{-(N+\lambda)}\,\mathbbm 1_{1\le|x|\le R}(x)\,.
$$
Then
$$
\int_{\R^N} |x|^\lambda\,\rho_R\,dx=\int_{\R^N}\rho_R^q\,dx=\left|\Sph^{N-1}\right|\,\log R
$$
and, as a consequence,
$$
\frac{\Jlambda{\rho_R}x}{\left( \int_{\R^N} \rho_R^{N/(N+\lambda)} \,dx \right)^{(N+\lambda)/N}}=\left(\left|\Sph^{N-1}\right|\,\log R\right)^{-\lambda/N} \to 0\quad\text{as}\quad R\to\infty \,.
$$
This proves that $\mathcal C_{N,\lambda,q}=0$ for $q=N/(N+\lambda)$.
\end{proof}

In order to prove that $\mathcal C_{N,\lambda,q}>0$ in the remaining cases we need the following simple bound.
%-------------------------------------------------------------------------
\begin{lemma}\label{ineq2}
Let $\lambda>0$ and $N/(N+\lambda)<q<1$. Then there is a constant $c_{N,\lambda,q}>0$ such that for all $\rho\ge0$,
$$
\left( \int_{\R^N} \rho\,dx \right)^{1-\frac{N\,(1-q)}{\lambda\,q}}\(\Jlambda\rho x\)^\frac{N\,(1-q)}{\lambda\,q}\ge c_{N,\lambda,q}\(\int_{\R^N}\rho^q\,dx\)^{1/q}\,.
$$
\end{lemma}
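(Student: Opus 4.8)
The plan is to prove the claimed inequality by a straightforward layer-cake / splitting argument, exploiting that we may bound $\int\rho^q\,dx$ by splitting the integral at a threshold height $t>0$: on the set $\{\rho\le t\}$ we use $\rho^q\le t^{q-1}\rho$, and on the set $\{\rho>t\}$ we control the contribution of large values by the weighted integral $\Jlambda\rho x$. To make the latter work, observe that for $\rho\ge0$ the level set $\{\rho>t\}$ cannot be concentrated near the origin once $t$ is small, because $\int_{\{\rho>t\}}\rho\,dx\le\int\rho\,dx<\infty$; but a cleaner route is to bound $\int_{\{\rho>t\}}\rho^q\,dx$ directly. Indeed, I would instead split as $\int_{\R^N}\rho^q\,dx=\int_{\{|x|\le r\}}\rho^q\,dx+\int_{\{|x|>r\}}\rho^q\,dx$ for a radius $r>0$ to be optimized.

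First I would estimate the inner piece by Hölder's inequality with exponents $1/q$ and $1/(1-q)$:
$$
\int_{\{|x|\le r\}}\rho^q\,dx\le\left(\int_{\R^N}\rho\,dx\right)^q\,\big|\{|x|\le r\}\big|^{1-q}=\left(\int_{\R^N}\rho\,dx\right)^q\,\omega_N^{1-q}\,r^{N(1-q)}\,,
$$
where $\omega_N:=|\{|x|\le1\}|$. Next I would estimate the outer piece, again by Hölder with the same exponents but now inserting the weight $|x|^\lambda$: writing $\rho^q=\big(|x|^\lambda\rho\big)^q\,|x|^{-\lambda q}$,
$$
\int_{\{|x|>r\}}\rho^q\,dx\le\left(\Jlambda\rho x\right)^q\left(\int_{\{|x|>r\}}|x|^{-\lambda q/(1-q)}\,dx\right)^{1-q}.
$$
The radial integral $\int_{\{|x|>r\}}|x|^{-\lambda q/(1-q)}\,dx$ converges precisely when $\lambda q/(1-q)>N$, i.e. $q>N/(N+\lambda)$ — which is exactly our hypothesis — and equals a dimensional constant times $r^{N-\lambda q/(1-q)}$. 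Thus both pieces are explicit powers of $r$, one increasing and one decreasing in $r$, so I would choose $r$ to balance them; the resulting optimal $r$ is a power of the ratio $\left(\Jlambda\rho x\right)/\left(\int\rho\,dx\right)$, and substituting back produces exactly the stated homogeneous bound, with $c_{N,\lambda,q}$ an explicit constant built from $\omega_N$, $|\Sph^{N-1}|$, the exponent $\lambda q/(1-q)-N$, and the optimization factor. (One checks the exponents match: balancing $r^{N(1-q)}\sim A$ against $r^{N-\lambda q/(1-q)}\cdot B^q$ with $A=(\int\rho)^q$, $B=\Jlambda\rho x$ gives $r$ as a power of $B/\int\rho$, and the common value is $(\int\rho)^{?}B^{?}$ with exponents that, after simplification, reproduce $1-\frac{N(1-q)}{\lambda q}$ and $\frac{N(1-q)}{\lambda q}$ up to raising to the power $q$ — consistent with the statement once both sides are raised to the $1/q$.)

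The only genuine obstacle is bookkeeping: making sure the exponents of $\int\rho\,dx$ and $\Jlambda\rho x$ after the optimization in $r$ collapse to the claimed $1-\frac{N(1-q)}{\lambda q}$ and $\frac{N(1-q)}{\lambda q}$, and that the scaling $\rho\mapsto\rho_\tau(x)=\tau^N\rho(\tau x)$ (under which $\int\rho$ is invariant, $\Jlambda\rho x$ scales like $\tau^{-\lambda}$, and $\int\rho^q$ scales like $\tau^{-N(1-q)}$) is respected by the final inequality — this is automatic once the $r$-optimization is carried out correctly and is a useful consistency check. There is also the degenerate/trivial case where $\Jlambda\rho x=+\infty$ or $\int\rho\,dx=+\infty$, in which the inequality holds vacuously, so we may assume both are finite. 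No compactness or variational input is needed; this is purely an application of Hölder's inequality twice plus a one-parameter optimization, which is why the lemma is labeled "simple."
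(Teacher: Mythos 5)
Your argument is correct and coincides with the paper's own proof: split $\int_{\R^N}\rho^q\,dx$ at a radius $R$, apply H\"older's inequality on $\{|x|<R\}$ against $\int\rho\,dx$ and on $\{|x|\ge R\}$ against $\Jlambda\rho x$ with the weight $|x|^{-\lambda q}$ (whose integrability is exactly the hypothesis $q>N/(N+\lambda)$), then optimize over $R$. The only difference is cosmetic — your opening detour through a height-threshold splitting is discarded in favor of the same radial decomposition the paper uses — so nothing further is needed.
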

%-------------------------------------------------------------------------
\begin{proof}
Let $R>0$. Using H\"older's inequality, we obtain
$$
\int_{\{|x|<R\}} \rho^q\,dx\le\left( \int_{\R^N} \rho \,dx \right)^q |B_R|^{1-q} = C_1\left( \int_{\R^N} \rho \,dx \right)^q R^{N\,(1-q)}
$$
and
\begin{multline*}
\int_{\{|x|\ge R\}} \rho^q\,dx= \int_{\{|x|\ge R\}} \(|x|^\lambda\,\rho\)^q |x|^{-\lambda q} \,dx 
\le\left(\Jlambda\rho x\right)^q\left( \int_{\{|x|\ge R\}} |x|^{-\frac{\lambda\,q}{1-q}} \,dx \right)^{1-q} \\
= C_2\left(\Jlambda\rho x\right)^q R^{-\lambda q + N\,(1-q)} \,.
\end{multline*}
The fact that $C_2<\infty$ comes from the assumption $q>N/(N+\lambda)$, which is the same as $\lambda q/(1-q)>N$. To conclude, we add these two inequalities and optimize over $R$.
\end{proof}

\begin{proof}[Proof of Proposition~\ref{ineq}. Part (2).]
By rearrangement inequalities it suffices to prove the inequality for symmetric non-increasing $\rho$'s. For such functions, by the simplest rearrangement inequality,
\begin{equation*}
\int_{\R^N} |x-y|^\lambda \rho(y)\,dx\ge\Jlambda\rho x\quad\text{for all}\quad x\in\R^N \,.
\end{equation*}
Thus,
$$
I_\lambda[\rho]\ge\Jlambda\rho x\int_{\R^N}\rho\,dx\,.
$$
In the range $\frac N{N+\lambda}<q<\frac{2\,N}{2\,N+\lambda}$, we recall that by Lemma~\ref{ineq2}, we have for any symmetric non-increasing function $\rho$,
$$
\frac{I_\lambda[\rho]}{\(\irN{\rho(x)}\right)^\alpha}\ge\(\irN\rho\,dx\)^{1-\alpha}\Jlambda\rho x\ge c_{N,\lambda,q}^{2-\alpha}\(\int_{\R^N}\rho^q\,dx\)^\frac{2-\alpha}q
$$
because $2-\alpha=\frac{\lambda\,q}{N\,(1-q)}$. As a consequence, we obtain that
$$
\mathcal C_{N,\lambda,q}\ge c_{N,\lambda,q}^{2-\alpha}\,.
$$
\end{proof}
%-------------------------------------------------------------------------
\begin{remark} The above computation explains a surprising feature of~\eqref{ineq:rHLS}: $I_\lambda[\rho]$ controls a product of two terms. However, in the range $N/(N+\lambda)<q<2N/(2N+\lambda)$ which corresponds to $\alpha\in(0,1)$, the problem is actually reduced to the interpolation of $\irN{\rho^q}$ between $\irN\rho$ and $\Jlambda\rho x$, which has a more classical structure.
\end{remark}
%-------------------------------------------------------------------------
\begin{remark}\label{Rem:Cestimate} There is an alternative way to prove the inequality in the range $2N/(2N+\lambda)<q<1$ using the results from~\cite{DZ15,MR3666824}. We can indeed rely on H\"older's inequality to get that
$$
\(\irN{\rho(x)^q}\)^{1/q}\le\(\irN{\rho(x)^\frac{2\,N}{2\,N+\lambda}}\)^{\eta\,\frac{2\,N+\lambda}{2\,N}}\(\irN\rho\)^{1-\eta}
$$
with $\eta=\frac{2\,N\,(1-q)}{\lambda\,q}$. By applying the inequality
$$
I_\lambda[\rho]\ge\mathcal C_{N,\lambda,\frac{2\,N}{2\,N+\lambda}}\(\irN{\rho(x)^\frac{2\,N}{2\,N+\lambda}}\)^\frac{2\,N+\lambda}N
$$
shown in~\cite{DZ15,MR3666824} with an explicit constant, we obtain that
$$
\mathcal C_{N,\lambda,q}\ge\mathcal C_{N,\lambda,\frac{2\,N}{2\,N+\lambda}}=\pi^\frac\lambda2\,\frac{\Gamma\big(\frac N2-\frac\lambda2\big)}{\Gamma\big(N-\frac\lambda2\big)}\(\frac{\Gamma(N)}{\Gamma\big(\frac N2\big)}\)^{1-\frac\lambda N}\,.
$$
We notice that $\alpha=-\,2\,(1-\eta)/\eta$ is negative.
\end{remark}
%-------------------------------------------------------------------------
\begin{remark}\label{Rem:c} In the proof of Lemma~\ref{ineq2}, the computation of the lower bound can be made more explicit and shows that
$$
c_{N,\lambda,q}\ge O\left(\big((N+\lambda)\,q-N\big)^{(1-q)/q}\right)\quad\mbox{as}\quad q\to N/(N+\lambda)_+\,.
$$
The optimal constant $c_{N,\lambda,q}$ can be explicitly computed after observing that a minimizer exists and that $x\mapsto(1+|x|^\lambda)^{-1/(1-q)}$ solves the Euler-Lagrange equation (after taking into account translations, scalings and homogeneity), thus realizing the equality case. An elementary computation shows that $\lim_{q\to N/(N+\lambda)_+}c_{N,\lambda,q}=0$. This limit is compatible with the fact that 
$$
\lim_{q\to N/(N+\lambda)_+}\mathcal C_{N,\lambda,q}=0
$$
because the map $(\lambda,q)\mapsto\mathcal C_{N,\lambda,q}$ is \emph{upper semi-continuous}. The proof of this last property goes as follows. Let us denote by
$$
\mathsf Q_{q,\lambda}[\rho]:=\frac{I_\lambda[\rho]}{\left( \int_{\R^N} \rho(x)\,dx \right)^\alpha\left( \int_{\R^N} \rho(x)^q\,dx \right)^{(2-\alpha)/q}}
$$
the energy quotient in which we emphasize the dependence in $q$ and $\lambda$. The infimum of $\mathsf Q_{q,\lambda}[\rho]$ over $\rho$ is $\mathcal C_{N,\lambda,q}$. Let $(q,\lambda)$ be a given point in $(0,1)\times(0,\infty)$ and let $(q_n,\lambda_n)$ be a sequence converging to $(q,\lambda)$. Let $\epsilon>0$ and choose a $\rho$ which is bounded, has compact support and is such that $\mathsf Q_{q,\lambda}[\rho]\leq \mathcal C_{N,\lambda,q} + \epsilon$. Then, by the definition as an infimum, $\mathcal C_{N,q_n,\lambda_n} \leq\mathsf Q_{q_n,\lambda_n}[\rho]$.
On the other hand, the assumptions on $\rho$ easily imply that $\lim_{n\to\infty}\mathsf Q_{q_n,\lambda_n}[\rho]=\mathsf Q_{q,\lambda}[\rho]$. We conclude that $\limsup_{n\to\infty}\mathcal C_{N,q_n,\lambda_n}\leq\mathcal C_{N,\lambda,q} + \epsilon$. Since $\epsilon$ is arbitrary, we obtain the claimed upper semi-continuity.\end{remark}
%-------------------------------------------------------------------------

%%%%%%%%%%%%%%%%%%%%%%%%%%%%%%%%%%%%%%%%%%%%%%%%%%%%%%%%%%%%%%%%%%%%%%%%%%
%%%%%%%%%%%%%%%%%%%%%%%%%%%%%%%%%%%%%%%%%%%%%%%%%%%%%%%%%%%%%%%%%%%%%%%%%%
\section{Existence of minimizers}\label{Sec:Existence}

We now investigate whether there are minimizers for $\mathcal C_{N,\lambda,q}$ if $N/(N+\lambda)<q<1$. As mentioned before, the conformally invariant case $q=2N/(2N+\lambda)$ has been dealt with before and will be excluded from our considerations. We start with the simpler case $2N/(2N+\lambda)<q<1$, which corresponds to $\alpha<0$.
%-------------------------------------------------------------------------
\begin{proposition}\label{opt0}
Let $\lambda>0$ and $2N/(2N+\lambda)<q<1$. Then there is a minimizer for $\mathcal C_{N,\lambda,q}$.
\end{proposition}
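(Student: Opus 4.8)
The plan is to run the direct method of the calculus of variations, using rearrangement to reduce to radial symmetric-decreasing minimizing sequences, and then use the homogeneity and scaling/translation invariances to normalize the sequence before extracting a convergent subsequence. First I would fix a minimizing sequence $(\rho_n)$ for $\mathcal C_{N,\lambda,q}$; by the Riesz rearrangement inequality (which makes $I_\lambda$ non-increasing under symmetric decreasing rearrangement, since $|x-y|^\lambda$ is, up to sign, a symmetric decreasing function of $x-y$ — more precisely $-|x-y|^\lambda$ increases, so one argues via $\iint \rho(x)(A-|x-y|^\lambda)\rho(y)$ on bounded support and a limiting argument, exactly as the remark before Part~(2) of Proposition~\ref{ineq} uses it) together with the equimeasurability of rearrangement which preserves $\int\rho$ and $\int\rho^q$, I may assume each $\rho_n$ is radially symmetric and non-increasing. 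Using the scaling $\rho\mapsto\tau^N\rho(\tau\cdot)$ and the multiplicative scaling $\rho\mapsto c\rho$ — both of which leave the quotient $\mathsf Q_{q,\lambda}$ invariant by the choice of $\alpha$ — I can normalize $\int_{\R^N}\rho_n\,dx = 1$ and $\int_{\R^N}\rho_n^q\,dx = 1$, so that $I_\lambda[\rho_n]\to\mathcal C_{N,\lambda,q}$.

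The heart of the argument is compactness. A radial non-increasing $\rho_n$ with $\int\rho_n\,dx=1$ satisfies the pointwise bound $\rho_n(x)\le |B_{|x|}|^{-1}\int_{\R^N}\rho_n\,dy = c_N |x|^{-N}$; combined with the uniform $\mathrm L^1$ and $\mathrm L^q$ bounds this gives uniform $\mathrm L^p$ bounds for $p$ in a range around $[q,1]$ and equicontinuity-type control away from the origin, so by a Helly-type / local compactness argument (monotone radial functions) I extract a subsequence converging a.e.\ (and in $\mathrm L^p_{\rm loc}$ away from $0$) to some radial non-increasing $\rho_*\ge0$. The key point, which is what makes the case $\alpha<0$ genuinely easier, is that $I_\lambda[\rho_n]\to\mathcal C_{N,\lambda,q}$ bounded forces tightness: since $|x-y|^\lambda\ge 2^{-\lambda}(|x|^\lambda+|y|^\lambda)$ is false in general but $\int|x-y|^\lambda\rho_n(y)\,dy\ge \int |x|^\lambda \rho_n(y)\,dy$ for symmetric decreasing $\rho_n$ (simplest rearrangement inequality, already invoked above), we get $I_\lambda[\rho_n]\ge \Jlambda{\rho_n}x\cdot\int\rho_n\,dx = \Jlambda{\rho_n}x$, so $\int|x|^\lambda\rho_n\,dx$ stays bounded, which rules out mass escaping to infinity. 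Splitting mass is prevented because the sequence is radial and non-increasing (no dichotomy for such profiles with fixed $\mathrm L^1$ mass and controlled $\int|x|^\lambda\rho_n$), and vanishing is prevented by $\int\rho_n^q\,dx=1$ together with the $\mathrm L^1$ bound and the pointwise decay bound. Hence the limit $\rho_*$ is nontrivial with $\int\rho_*\,dx = 1$ and $\int|x|^\lambda\rho_*\,dx<\infty$.

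Finally I pass to the limit in the quotient. By Fatou applied with the a.e.\ convergence, $I_\lambda[\rho_*]\le\liminf I_\lambda[\rho_n]=\mathcal C_{N,\lambda,q}$ and $\int\rho_*\,dx\le 1$, while lower semicontinuity of $\int\rho^q\,dx$ fails (the functional $\rho\mapsto\int\rho^q$ is concave, hence upper, not lower, semicontinuous) — so here I instead need $\int\rho_*^q\,dx\ge 1$, which follows from $\int\rho_n^q\,dx=1$ together with the tightness and uniform integrability established above (no mass escapes, and the pointwise bound $\rho_n\le c_N|x|^{-N}$ gives uniform integrability of $\rho_n^q$ near $0$ precisely because $q>2N/(2N+\lambda)>N/(N+\lambda)$ makes $|x|^{-Nq}$ locally integrable, and the tail is controlled by $\int|x|^\lambda\rho_n$). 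Actually the cleanest route is: since $\alpha<0$, the denominator of $\mathsf Q$ is $(\int\rho)^{\alpha}(\int\rho^q)^{(2-\alpha)/q}$ with a negative power of $\int\rho$, so I can afford $\int\rho_*\,dx\le 1$ (it only helps) and I genuinely need the $\mathrm L^q$ norm to not drop, which the uniform integrability delivers; then $\mathsf Q_{q,\lambda}[\rho_*]\le\liminf\mathsf Q_{q,\lambda}[\rho_n]=\mathcal C_{N,\lambda,q}$, forcing equality and exhibiting $\rho_*$ as a minimizer. The main obstacle is precisely establishing the uniform integrability of $\rho_n^q$ (equivalently, strong $\mathrm L^q$ convergence) so that no $\mathrm L^q$-mass is lost in the limit; everything else is the standard machinery, and the assumption $q>2N/(2N+\lambda)$ is what makes this step go through without the concentration phenomenon that occupies the rest of the paper.
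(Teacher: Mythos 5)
Your argument is correct and follows the same direct-method skeleton as the paper (symmetrization to radial non-increasing profiles, normalization $\int_{\R^N}\rho_j\,dx=\int_{\R^N}\rho_j^q\,dx=1$, the pointwise bound $\rho_j(x)\le C\min\{|x|^{-N},|x|^{-N/q}\}$, Helly, Fatou for $I_\lambda$ and for the $\mathrm L^1$ term, and the observation that $\alpha<0$ lets one tolerate $\int_{\R^N}\rho_*\,dx\le1$), but you treat the one genuinely delicate step --- showing that no $\mathrm L^q$ mass is lost, so that $\int_{\R^N}\rho_*^q\,dx=1$ --- by a different mechanism. The paper picks $\pq\in\(N/(N+\lambda),q\)$, invokes the already-proved inequality at exponent $\pq$ (with $\mathcal C_{N,\lambda,\pq}>0$) to get a uniform $\mathrm L^{\pq}$ bound and hence the improved decay $\rho_j(x)\le C'|x|^{-N/\pq}$, after which $\min\{|x|^{-N},|x|^{-N/\pq}\}\in\mathrm L^q(\R^N)$ and dominated convergence finishes the job; this is the ``idea from Bellazzini--Frank--Visciglia'' the paper flags. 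You instead use $I_\lambda[\rho_j]\ge\big(\int_{\R^N}|x|^\lambda\rho_j\,dx\big)\int_{\R^N}\rho_j\,dx$ for symmetric decreasing $\rho_j$ to extract a uniform bound on the $\lambda$-moment from the boundedness of the minimizing sequence, and then control the $\mathrm L^q$ tail by the H\"older estimate $\int_{\{|x|\ge R\}}\rho_j^q\,dx\le\big(\int_{\R^N}|x|^\lambda\rho_j\,dx\big)^q\,C\,R^{-\lambda q+N(1-q)}$ already appearing in the proof of Lemma~\ref{ineq2}; combined with the domination $\rho_j^q\le C|x|^{-Nq}$ near the origin and a.e.\ convergence, this yields $\int_{\R^N}\rho_j^q\,dx\to\int_{\R^N}\rho_*^q\,dx$. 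This is a legitimate and arguably more self-contained alternative: it avoids re-invoking the inequality at a second exponent, at the price of giving only a tail/uniform-integrability statement rather than an improved pointwise majorant. One small inaccuracy to fix: local integrability of $|x|^{-Nq}$ near the origin holds for every $q<1$ and has nothing to do with $q>2N/(2N+\lambda)$; the hypothesis $q>N/(N+\lambda)$ enters only through the exponent $-\lambda q+N(1-q)<0$ in the tail estimate (equivalently, the finiteness of $\int_{\{|x|\ge R\}}|x|^{-\lambda q/(1-q)}\,dx$). With that attribution corrected, the proof is complete.
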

%-------------------------------------------------------------------------
\begin{proof}
Let $(\rho_j)_{j\in\mathbb N}$ be a minimizing sequence. By rearrangement inequalities we may assume that the $\rho_j$ are symmetric non-increasing. By scaling and homogeneity, we may also assume that
$$
\int_{\R^N} \rho_j(x)\,dx = \int_{\R^N}\rho_j(x)^q\,dx = 1
\quad\text{for all}\,j\in\mathbb N\,.
$$
This together with the symmetric non-increasing character implies that 
$$
\rho_j(x)\le C\,\min\left\{ |x|^{-N},\,|x|^{-N/q}\right\}
$$
with $C$ independent of $j$. By Helly's selection theorem we may assume, after passing to a subsequence if necessary, that $\rho_j\to\rho$ almost everywhere. The function $\rho$ is symmetric non-increasing and satisfies the same upper bound as $\rho_j$.

By Fatou's lemma we have
$$
\liminf_{j\to\infty} I_\lambda[\rho_j]\ge I_\lambda[\rho] 
\quad\text{and}\quad
1\ge \int_{\R^N} \rho(x)\,dx \,.
$$
To complete the proof we need to show that $\int_{\R^N} \rho(x)^q\,dx =1$ (which implies, in particular, that $\rho\not\equiv 0$) and then $\rho$ will be an optimizer.

Modifying an idea from~\cite{MR3273640} we pick $\pq\in\(N/(N+\lambda),q\)$ and apply~\eqref{ineq:rHLS} with the same~$\lambda$ and $\alpha(p)=\big(2\,N-p\,(2\,N+\lambda)\big)\big/\big(N\,(1-p)\big)$ to get
$$
I_\lambda[\rho_j]\ge\mathcal C_{N,\lambda,\pq}\left( \int_{\R^N} \rho_j^{\pq}\,dx \right)^{(2-\alpha(p))/\pq} \,.
$$
Since the left side converges to a finite limit, namely $\mathcal C_{N,\lambda,q}$, we find that the $\rho_j$ are uniformly bounded in $\mathrm L^{\pq}(\R^N)$ and therefore we have as before
$$
\rho_j(x)\le C'\,|x|^{-N/\pq} \,.
$$
Since $\min\left\{|x|^{-N},|x|^{-N/\pq}\right\}\in\mathrm L^q(\R^N)$, we obtain by dominated convergence
$$
\int_{\R^N} \rho_j^q \,dx \to \int_{\R^N} \rho^q\,dx \,,
$$
which, in view of the normalization, implies that $\int_{\R^N} \rho(x)^q\,dx =1$, as claimed.
\end{proof}

Next we prove the existence of minimizers in the range $N/(N+\lambda)<q<2N/(2N+\lambda)$ by considering the \emph{minimization of a relaxed problem}. The idea behind the relaxed problem is to allow $\rho$ to contain a Dirac function at the origin. The motivation for this comes from the proof of the first part of Proposition~\ref{ineq}. The expression on the right side of~\eqref{eq:functionalplusdelta} comes precisely from $\rho$ together with a delta function of strength $M$ at the origin. We have seen that in the regime $q\le N/(N+\lambda)$ (that is, $\alpha\ge1$) it is advantageous to increase~$M$ to infinity. This is no longer so if $N/(N+\lambda)<q<2N/(2N+\lambda)$. While it is certainly disadvantageous to move $M$ to infinity, it depends on $\rho$ whether the optimum $M$ is $0$ or a positive finite value.

Let
$$
\mathcal C_{N,\lambda,q}^{\rm{rel}} := \inf\left\{\mathcal Q[\rho,M]\,:\,0\le\rho\in\mathrm L^1\cap\mathrm L^q(\R^N) \,,\ \rho\not\equiv 0 \,,\ M\ge0 \right\}
$$
where $\mathcal Q[\rho,M]$ is defined by~\eqref{eq:functionalplusdelta}. We know that $\mathcal C_{N,\lambda,q}^{\rm{rel}}\le\mathcal C_{N,\lambda,q}$ by restricting the minimization to $M=0$. On the other hand,~\eqref{eq:functionalplusdelta} gives $\mathcal C_{N,\lambda,q}^{\rm{rel}}\ge\mathcal C_{N,\lambda,q}$. Therefore,
$$
\mathcal C_{N,\lambda,q}^{\rm{rel}}= \mathcal C_{N,\lambda,q} \,,
$$
which justifies our interpretation of $\mathcal C_{N,\lambda,q}^{\rm{rel}}$ as a \emph{relaxed minimization} problem. Let us start with a preliminary observation.
%-------------------------------------------------------------------------
\begin{lemma}\label{sym-pos} Let $\lambda>0$ and $N/(N+\lambda)<q<1$. If $\rho\ge0$ is an optimal function for either $\mathcal C_{N,\lambda,q}^{\rm{rel}}$ (for an $M>0$) or $\mathcal C_{N,\lambda,q}$ (with $M=0$), then $\rho$ is radial (up to a translation), monotone non-increasing and positive almost everywhere on $\R^d$.\end{lemma}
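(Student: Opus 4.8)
The plan is to treat the two conclusions separately: positivity of $\rho$ will come from an elementary one-parameter perturbation that exploits the sublinearity $0<q<1$, and radial monotonicity from the reverse Riesz rearrangement inequality together with a characterization of its equality cases. Throughout, ``$\rho$ optimal'' means $\mathcal Q[\rho,M]=\mathcal C_{N,\lambda,q}$ for a fixed $M\ge0$ (recall $\mathcal C^{\rm rel}_{N,\lambda,q}=\mathcal C_{N,\lambda,q}$), and I will use freely that $I_\lambda[\rho]<\infty$ forces $\int_{\R^N}|x|^\lambda\,\rho\,dx<\infty$, which follows by splitting the $y$-integration in $I_\lambda[\rho]$ at a radius $r_0$ with $\int_{B_{r_0}}\rho>0$.

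\emph{Positivity.} Assume $\{\rho=0\}$ has positive measure and pick a nonzero $0\le\eta\in\mathrm L^\infty(\R^N)$ supported in a bounded subset of $\{\rho=0\}$ of finite positive measure. For $\rho_\eps:=\rho+\eps\eta$ one has, by the quadratic dependence of $I_\lambda$ and the integrability just recalled, $I_\lambda[\rho_\eps]=I_\lambda[\rho]+O(\eps)$, $\int|x|^\lambda\rho_\eps\,dx=\int|x|^\lambda\rho\,dx+O(\eps)$ and $\int\rho_\eps\,dx=\int\rho\,dx+O(\eps)$, whereas, the supports of $\rho$ and $\eta$ being essentially disjoint,
$$
\int_{\R^N}\rho_\eps^q\,dx=\int_{\R^N}\rho^q\,dx+\eps^q\int_{\R^N}\eta^q\,dx\,.
$$
Since $(2-\alpha)/q=\lambda/(N(1-q))>0$, the denominator of $\mathcal Q[\rho_\eps,M]$ acquires a term of order $\eps^q$ with positive coefficient, which dominates every $O(\eps)$ correction for $\eps$ small; hence $\mathcal Q[\rho_\eps,M]<\mathcal Q[\rho,M]$, contradicting optimality. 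Therefore $\rho>0$ a.e.

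\emph{Radial monotonicity.} Let $\rho^*$ be the symmetric decreasing rearrangement of $\rho$. Then $\int\rho^*\,dx=\int\rho\,dx$, $\int(\rho^*)^q\,dx=\int\rho^q\,dx$, the bathtub principle gives $\int|x|^\lambda\rho^*\,dx\le\int|x|^\lambda\rho\,dx$ (since $|x|^\lambda$ increases with $|x|$), and the reverse Riesz rearrangement inequality gives $I_\lambda[\rho^*]\le I_\lambda[\rho]$ — for the last point one reuses the computation from Section~\ref{Sec:Validity}: if $\supp\rho\subset B_R$ then $|x-y|^\lambda=(2R)^\lambda-h(x-y)$ on $B_R\times B_R$ with $h(z):=\big((2R)^\lambda-|z|^\lambda\big)_+$ symmetric decreasing, so $I_\lambda[\rho]=(2R)^\lambda\big(\int\rho\big)^2-\iint\rho(x)\,h(x-y)\,\rho(y)\,dx\,dy$ and the classical Riesz inequality applied to the last integral yields the claim, the support restriction being removed afterwards via $\rho\,\1_{B_R}\uparrow\rho$ (hence $(\rho\,\1_{B_R})^*\uparrow\rho^*$) and monotone convergence. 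Consequently $\mathcal Q[\rho^*,M]\le\mathcal Q[\rho,M]$, so $\rho^*$ is again optimal; in particular $I_\lambda[\rho^*]=I_\lambda[\rho]$, and, when $M>0$, also $\int|x|^\lambda\rho^*\,dx=\int|x|^\lambda\rho\,dx$. Inserting the first equality into the characterization of the equality cases of the Riesz rearrangement inequality (the auxiliary kernel $h$ above is \emph{strictly} symmetric decreasing, so one may invoke Burchard's equality theorem; for $\lambda\le2$ one may instead pass through the subordination identity $|z|^\lambda=c_{N,\lambda}\int_0^\infty\big(1-e^{-t|z|^2}\big)\,t^{-1-\lambda/2}\,dt$ and the strict Riesz inequality for the Gaussian) shows that $\rho$ equals a translate of $\rho^*$ almost everywhere. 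When $M>0$, the extra equality $\int|x|^\lambda\rho\,dx=\int|x|^\lambda\rho^*\,dx$ together with the strict minimality of $a\mapsto\int_{\R^N}|x|^\lambda\rho^*(x-a)\,dx$ at $a=0$ forces the translation parameter to vanish. Combined with the positivity already proved, this yields all the assertions.

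The step I expect to be the genuine obstacle is the upgrade from the \emph{non-strict} rearrangement inequality to its \emph{equality case} when the optimizer has unbounded support. The monotone truncation delivers $I_\lambda[\rho^*]\le I_\lambda[\rho]$ effortlessly, but to exclude non-radial optimizers one must either apply a strict rearrangement theorem directly to the unbounded-support function against the truncated kernels $h_R$ and control the limit $R\to\infty$, or — only when $\lambda\le2$, where $|x-y|^\lambda$ is conditionally negative definite — reduce to the Gaussian kernel as above; for $\lambda>2$ a careful use of Burchard's theorem appears to be needed, and the delicate point is to ensure that equality at the level of $I_\lambda$ does propagate to equality for a genuinely strictly symmetric decreasing kernel.
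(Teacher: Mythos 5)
Your argument follows the same two-step strategy as the paper's proof. The positivity part is, up to replacing $\1_E$ by a general bounded $\eta$ supported in the zero set, exactly the paper's argument: in both cases the point is that the denominator of $\mathcal Q$ gains a term of order $\epsilon^q$ with favourable sign while the numerator only changes by $O(\epsilon)$, and $\epsilon^q\gg\epsilon$ since $q<1$; your preliminary observation that $I_\lambda[\rho]<\infty$ forces $\int_{\R^N}|x|^\lambda\rho\,dx<\infty$ is correct and needed to control the cross term. For the radial monotonicity the paper offers only the sentence ``by rearrangement inequalities and up to a translation'', so your write-up is more explicit than the source: the decomposition $|x-y|^\lambda=(2R)^\lambda-\bigl((2R)^\lambda-|x-y|^\lambda\bigr)_+$ on compact supports, the removal of the support restriction by monotone truncation, and the bathtub bound for $\int_{\R^N}|x|^\lambda\rho\,dx$ (with its strict version pinning down the translation when $M>0$) are all correct and are presumably what the authors intend. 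The obstacle you flag is genuine and is not addressed by the paper either: since the positivity part shows that any optimizer has full support, the auxiliary kernel is never strictly symmetric decreasing on the whole relevant domain, and equality of $I_\lambda[\rho]$ and $I_\lambda[\rho^*]$ obtained in the limit of truncations does not automatically yield equality in each truncated Riesz inequality. Your subordination identity settles the equality case for $0<\lambda<2$ (at $\lambda=2$ one can instead expand $I_2[\rho]=2\int\rho\,dx\int|x|^2\rho\,dx-2\,|\!\int x\,\rho\,dx|^2$ and argue directly), but for $\lambda>2$ your proposal, as written, leaves the equality case open. In summary: same route as the paper, a complete positivity argument, and an honestly identified hole in the equality-case analysis for $\lambda>2$ which the paper's one-line appeal to rearrangement inequalities does not fill either.
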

%-------------------------------------------------------------------------
\begin{proof} Since $\mathcal C_{N,\lambda,q}$ is positive, we observe that $\rho$ is not identically $0$. By rearrangement inequalities and up to a translation, we know that~$\rho$ is radial and monotone non-increa\-sing. Assume by contradiction that $\rho$ vanishes on a set $E\subset\R^N$ of finite, positive measure. Then
$$
\mathcal Q\big[\rho,M+\epsilon\,\1_E\big]=\mathcal Q[\rho,M]\(1-\frac{2-\alpha}q\,\frac{|E|}{\irN{\rho(x)^q}}\,\epsilon^q+o(\epsilon^q)\)
$$
as $\varepsilon\to0_+$, a contradiction to the minimality for sufficiently small $\epsilon>0$.
\end{proof}
%-------------------------------------------------------------------------
\begin{proposition}\label{opt1}
Let $\lambda>0$ and $N/(N+\lambda)<q<2N/(2N+\lambda)$. Then there is a minimizer for $\mathcal C_{N,\lambda,q}^{\rm{rel}}$.
\end{proposition}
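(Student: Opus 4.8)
The plan is to apply the direct method. The relaxation is designed so that concentration of mass at the origin — the only non‑compactness mechanism when $\alpha\in(0,1)$ — is automatically absorbed into the Dirac term, while the confining factor $|x|^\lambda$ in $I_\lambda$ prevents mass from escaping to infinity. Let $(\rho_j,M_j)_j$ be a minimizing sequence for $\mathcal C_{N,\lambda,q}^{\rm rel}$. For the measure $\mu=\rho\,dx+M\,\delta_0$ the numerator of $\mathcal Q[\rho,M]$ equals $\iint_{\R^N\times\R^N}|x-y|^\lambda\,d\mu(x)\,d\mu(y)$; since symmetric decreasing rearrangement about the origin does not increase $I_\lambda[\rho]$ (as used for Proposition~\ref{ineq}) or $\Jlambda\rho x$, and preserves $\int\rho\,dx$ and $\int\rho^q\,dx$, we may assume each $\rho_j$ is radial and non‑increasing. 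Moreover $\mathcal Q$ is invariant under $(\rho,M)\mapsto(t\rho,tM)$ and under $(\rho,M)\mapsto(\rho_\tau,M)$ with $\rho_\tau(x):=\tau^N\rho(\tau x)$; using the first to normalize $\int\rho_j\,dx+M_j=1$ and then the second to normalize $\int\rho_j^q\,dx=1$, we obtain $\mathcal Q[\rho_j,M_j]=I_\lambda[\rho_j]+2\,M_j\,\Jlambda{\rho_j}x\to\mathcal C_{N,\lambda,q}^{\rm rel}$, so $I_\lambda[\rho_j]$ and $M_j\,\Jlambda{\rho_j}x$ are bounded.

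The first real step is \emph{tightness}. I would first rule out $M_j\to1$: otherwise, along a subsequence $\int\rho_j\,dx=1-M_j\to0$ while $\Jlambda{\rho_j}x\le C/M_j$ stays bounded, contradicting Lemma~\ref{ineq2} (whose left‑hand exponent $1-N(1-q)/(\lambda q)$ is positive since $q>N/(N+\lambda)$) because $\int\rho_j^q\,dx=1$. Hence $\int\rho_j\,dx\ge\delta>0$, and the rearrangement bound $I_\lambda[\rho_j]\ge\big(\int\rho_j\,dx\big)\,\Jlambda{\rho_j}x$ from the proof of Proposition~\ref{ineq} forces $\Jlambda{\rho_j}x\le C_\delta$. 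Consequently $\int_{\{|x|>R\}}\rho_j\,dx\le R^{-\lambda}\,\Jlambda{\rho_j}x\to0$ uniformly in $j$, so the probability measures $\mu_j:=\rho_j\,dx+M_j\,\delta_0$ form a tight family.

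Next, \emph{compactness and lower semicontinuity}. Being radial non‑increasing with $\int\rho_j\,dx\le1$ and $\int\rho_j^q\,dx\le1$, each $\rho_j$ satisfies $\rho_j(x)\le C\min\{|x|^{-N},|x|^{-N/q}\}$, so Helly's selection theorem yields (along a subsequence) $\rho_j\to\rho$ a.e., with $\rho$ radial non‑increasing and obeying the same bound. By tightness and Prokhorov's theorem $\mu_j$ converges weakly‑$*$ to a probability measure $\mu$, and the a.e. convergence together with the uniform bound on $\rho_j$ on each set $\{|x|\ge\epsilon\}$ identifies $\mu=\rho\,dx+\widetilde M\,\delta_0$ with $\widetilde M:=1-\int\rho\,dx\ge0$, so $\int\rho\,dx+\widetilde M=1$. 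Since $|x-y|^\lambda$ is nonnegative and continuous, $\mu_j\times\mu_j\to\mu\times\mu$ weakly‑$*$ and the standard weak‑$*$ lower semicontinuity of $\nu\mapsto\iint|x-y|^\lambda\,d\nu\,d\nu$ give
$$
I_\lambda[\rho]+2\,\widetilde M\,\Jlambda\rho x=\iint_{\R^N\times\R^N}|x-y|^\lambda\,d\mu(x)\,d\mu(y)\le\liminf_j\Big(I_\lambda[\rho_j]+2\,M_j\,\Jlambda{\rho_j}x\Big)=\mathcal C_{N,\lambda,q}^{\rm rel}\,.
$$
It remains to show $\int\rho^q\,dx=1$. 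Splitting the integral, H\"older's inequality as in Lemma~\ref{ineq2} gives $\int_{\{|x|>R\}}\rho_j^q\,dx\le C\,(\Jlambda{\rho_j}x)^q\,R^{-(\lambda q-N(1-q))}$, while $\rho_j(x)\le C|x|^{-N}$ gives $\int_{B_\epsilon}\rho_j^q\,dx\le C\int_{B_\epsilon}|x|^{-Nq}\,dx=C'\,\epsilon^{N(1-q)}$; both bounds are uniform in $j$ and vanish as $R\to\infty$ and $\epsilon\to0$, and on each annulus $\{\epsilon\le|x|\le R\}$ the pointwise bound on $\rho_j$ permits dominated convergence. Hence $\int\rho_j^q\,dx\to\int\rho^q\,dx=1$, so $\rho\not\equiv0$, and therefore $\mathcal Q[\rho,\widetilde M]=I_\lambda[\rho]+2\,\widetilde M\,\Jlambda\rho x\le\mathcal C_{N,\lambda,q}^{\rm rel}$; since the reverse inequality is automatic, $(\rho,\widetilde M)$ is a minimizer.

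The main obstacle is the tightness step: one must prevent the whole profile $\rho_j$ from being swallowed by the Dirac mass ($M_j\to1$), which is precisely what the combination of Lemma~\ref{ineq2} with the rearrangement estimate $I_\lambda[\rho]\ge\big(\int\rho\,dx\big)\,\Jlambda\rho x$ rules out, thereby yielding the uniform bound on $\Jlambda{\rho_j}x$ and hence tightness. Once tightness is secured, concentration at the origin costs nothing by the very definition of the relaxed functional, and the remaining arguments are routine.
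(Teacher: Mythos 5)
Your proof is correct and follows the same overall architecture as the paper's: symmetric decreasing rearrangement, normalization $\int\rho_j\,dx+M_j=\int\rho_j^q\,dx=1$, the pointwise bound $\rho_j(x)\le C\min\{|x|^{-N},|x|^{-N/q}\}$, Helly's selection theorem, identification of the weak-$*$ limit measure as $\rho\,dx+\widetilde M\,\delta_0$, lower semicontinuity of the interaction term, and strong convergence of $\int\rho_j^q\,dx$. Three of your steps are, however, implemented differently and are worth comparing. First, you explicitly rule out total collapse onto the Dirac mass ($M_j\to1$, i.e.\ $\int\rho_j\,dx\to0$) by combining Lemma~\ref{ineq2} with the bound $\Jlambda{\rho_j}x\le C/M_j$; this yields the uniform lower bound $\int\rho_j\,dx\ge\delta$ and, via $I_\lambda[\rho_j]\ge\big(\int\rho_j\,dx\big)\Jlambda{\rho_j}x$, a uniform bound on the moments $\Jlambda{\rho_j}x$. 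The paper does not isolate this step; it is in fact needed to make the reference ``in the same way as before'' to the $\mathrm L^p$-argument of Proposition~\ref{opt0} fully rigorous, since that argument uses $\big(\int\rho_j\,dx\big)^{\alpha(p)}$ bounded away from zero. Second, for the convergence $\int\rho_j^q\,dx\to1$ you replace the paper's device of invoking inequality~\eqref{ineq:rHLS} with an auxiliary exponent $\pq\in(N/(N+\lambda),q)$ by a direct three-region splitting (near the origin via $\rho_j\le C|x|^{-N}$, near infinity via H\"older against the uniform moment bound, dominated convergence on the annulus); this is more self-contained, at the price of needing the moment bound from your tightness step. Third, you treat the whole numerator as the quadratic form $\iint|x-y|^\lambda\,d\mu_j\,d\mu_j$ with $\mu_j=\rho_j\,dx+M_j\,\delta_0$ and apply weak-$*$ lower semicontinuity once, which streamlines (and slightly cleans up) the paper's two-step argument combining lower semicontinuity for $I_\lambda[\rho_j]$ with Fatou's lemma for $\Jlambda{\rho_j}x$. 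All steps check out; in particular the scale invariance $\mathcal Q[\rho_\tau,M]=\mathcal Q[\rho,M]$ used for the normalization holds precisely because $2-\alpha=\lambda q/(N(1-q))$.
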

%-------------------------------------------------------------------------
We will later show that for $N=1$ and $N=2$ there is a minimizer for the original problem $\mathcal C_{N,\lambda,q}$ in the full range of $\lambda$'s and $q$'s covered by Proposition~\ref{opt1}. If $N\ge3$, the same is true under an additional restriction.

\begin{proof}[Proof of Proposition~\ref{opt1}]
The beginning of the proof is similar to that of Proposition~\ref{opt0}. Let $(\rho_j,M_j)$ be a minimizing sequence. By rearrangement inequalities we may assume that $\rho_j$ is symmetric non-increasing. Moreover, by scaling and homogeneity, we may assume that
$$
\int_{\R^N} \rho_j \,dx +M_j = \int_{\R^N} \rho_j^q = 1 \,.
$$
In a standard way this implies that
$$
\rho_j(x)\le C\,\min\left\{ |x|^{-N}, |x|^{-N/q}\right\}
$$
with $C$ independent of $j$. By Helly's selection theorem we may assume, after passing to a subsequence if necessary, that $\rho_j\to\rho$ almost everywhere. The function $\rho$ is symmetric non-increasing and satisfies the same upper bound as $\rho_j$. Passing to a further subsequence, we can also assume that $(M_j)$ and $\(\irN{\rho_j}\)$ converge and define $M:=L+\lim_{j\to\infty}M_j$ where $L=\lim_{j\to\infty}\irN{\rho_j}-\irN\rho$, so that $\irN\rho+M=1$.
In the same way as before, we show that
$$
\int_{\R^N} \rho(x)^q\,dx = 1\,.
$$

We now turn our attention to the $\mathrm L^1$-term. We cannot invoke Fatou's lemma because $\alpha\in(0,1)$. The problem with this term is that $|x|^{-N}$ is not integrable at the origin and we cannot get a better bound there. We have to argue via measures, so let $d\mu_j(x) := \rho_j(x)\,dx$. Because of the upper bound on $\rho_j$ we have
$$
\mu_j\(\R^N\setminus B_R(0)\) = \int_{\{|x|\ge R\}} \rho_j(x)\,dx\le C \int_{\{|x|\ge R\}} \frac{dx}{|x|^{N/q}} = C'\,R^{-N\,(1-q)/q} \,.
$$
This means that the measures are tight. After passing to a subsequence if necessary, we may assume that $\mu_j\to\mu$ weak * in the space of measures on $\R^N$. Tightness implies that $\mu(\R^N)=L+\irN\rho$. Moreover, since the bound $C\,|x|^{-N/q}$ is integrable away from any neighborhood of the origin, we see that $\mu$ is absolutely continuous on $\R^N\setminus\{0\}$ and $d\mu/dx =\rho$. In other words,
$$
d\mu = \rho\,dx +L\,\delta\,.
$$

Using weak convergence in the space of measures one can show that
$$
\liminf_{j\to\infty} I_\lambda[\rho_j]\ge I_\lambda[\rho] + 2M\Jlambda\rho x\,.
$$
Finally, by Fatou's lemma,
$$
\liminf_{j\to\infty}\Jlambda{\rho_j}x\ge\int_{\R^N} |x|^\lambda\left(\rho(x)\,dx +L\,\delta\right) =\Jlambda\rho x\,. 
$$
Thus, 
$$
\liminf_{j\to\infty}\mathcal Q[\rho_j,M_j]\ge\mathcal Q[\rho,M]\,.
$$
By definition of $\mathcal C_{N,\lambda,q}^{\rm{rel}}$ the right side is bounded from below by $\mathcal C_{N,\lambda,q}^{\rm{rel}}$. On the other hand, by choice of $\rho_j$ and $M_j$ the left side is equal to $\mathcal C_{N,\lambda,q}^{\rm{rel}}$. This proves that $(\rho, M)$ is a minimizer for $\mathcal C_{N,\lambda,q}^{\rm{rel}}$.
\end{proof}

Next, we show that under certain assumptions a minimizer $(\rho_*,M_*)$ for the relaxed problem must, in fact, have $M_*=0$ and is therefore a minimizer of the original problem.
%-------------------------------------------------------------------------
\begin{proposition}\label{opt2}
Let $\lambda>0$ and $N/(N+\lambda)<q<2N/(2N+\lambda)$. If either $N=1$, $2$ or $N\ge3$ and
$\lambda >2N/(N-2)$, assume, in addition, that $q\ge1-2/N $. If $(\rho_*,M_*)$ is a minimizer for $\mathcal C_{N,\lambda,q}^{\rm{rel}}$, then $M_*=0$.
\end{proposition}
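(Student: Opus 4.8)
The strategy is to suppose $M_*>0$ and derive a contradiction by showing that, under the stated hypotheses, the absolutely continuous part $\rho_*$ cannot belong to $\mathrm L^1(\R^N)$.

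\textbf{Euler--Lagrange equations.} By Lemma~\ref{sym-pos} the minimizer $\rho_*$ is radial, non-increasing and strictly positive a.e., so it admits two-sided variations supported in any ball; and since $M_*>0$, the pair $(\rho_*,M_*+t)$ is admissible for $|t|$ small. Writing $\mu_*:=\rho_*\,dx+M_*\,\delta$, $U(x):=\int_{\R^N}|x-y|^\lambda\,d\mu_*(y)$, $\mathsf N:=I_\lambda[\mu_*]$, $\mathsf D_1:=M_*+\int_{\R^N}\rho_*\,dx$ and $\mathsf D_2:=\int_{\R^N}\rho_*^q\,dx$, differentiating $\mathcal Q$ in these two directions gives, for a.e. $x$,
$$
2\,U(x)=\frac{\alpha\,\mathsf N}{\mathsf D_1}+\frac{(2-\alpha)\,\mathsf N}{\mathsf D_2}\,\rho_*(x)^{q-1}
\qquad\text{and}\qquad
2\,U(0)=\frac{\alpha\,\mathsf N}{\mathsf D_1}\,,
$$
the second identity being available precisely because $M_*>0$ (and because $|0|^\lambda=0$, so $U(0)=\int|y|^\lambda\rho_*\,dy$). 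Subtracting, and using $U(x)=M_*\,|x|^\lambda+h(x)$ with $h:=|\cdot|^\lambda\ast\rho_*$,
$$
\rho_*(x)=K\,\big(U(x)-U(0)\big)^{-1/(1-q)}\,,\qquad U(x)-U(0)=M_*\,|x|^\lambda+\big(h(x)-h(0)\big)\,,
$$
where $K:=\big((2-\alpha)\mathsf N/(2\mathsf D_2)\big)^{1/(1-q)}>0$ (note $2-\alpha=\lambda q/(N(1-q))>0$).

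\textbf{A lower bound from the kernel.} Next I would show $h$ is radially non-decreasing: writing $\rho_*$ as a superposition of indicators of centred balls, this reduces to the elementary fact that among balls of a given radius the centred one minimizes $\int_B|z|^\lambda\,dz$ (bathtub principle). Hence $U(x)-U(0)\ge M_*\,|x|^\lambda>0$ for $x\ne0$, so
$$
\rho_*(x)\le K\,M_*^{-1/(1-q)}\,|x|^{-\lambda/(1-q)}\qquad\text{for all }x\ne0\,.
$$
Since $q>N/(N+\lambda)$ is equivalent to $\lambda q/(1-q)>N$, this forces $\lambda/(1-q)>N$; combined with $\rho_*\in\mathrm L^1(\R^N)$, this decay bound also guarantees $\int_{\R^N}|z|^{\lambda-k}\,\rho_*(z)\,dz<\infty$ for $0\le k\le\min(\lambda,2)$.

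\textbf{Behaviour of $h(x)-h(0)$ near the origin.} This is the crux: I claim $h(x)-h(0)=O\big(|x|^{\min(\lambda,2)}\big)$ as $x\to0$. If $\lambda\le1$ this follows at once from $|x-z|^\lambda-|z|^\lambda\le|x|^\lambda$, so $h(x)-h(0)\le|x|^\lambda\int\rho_*$. If $\lambda\ge2$, the finiteness of $\int|z|^{\lambda-2}\rho_*\,dz$ lets one differentiate twice under the integral, so $h\in C^2$ with $\nabla h(0)=0$ by symmetry and $h(x)-h(0)=O(|x|^2)$. The delicate case is $1<\lambda<2$ (for $N=1$ this still needs $\lambda>1$, but $N=1$, $\lambda\le1$ is covered above). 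Here I decompose over spheres: since $\rho_*$ is radial, $h(x)-h(0)=|\Sph^{N-1}|\int_0^\infty\rho_*(r)\,\big(\sigma_r(x)-r^\lambda\big)\,r^{N-1}\,dr$, where $\sigma_r(x)$ is the average of $|x-\zeta|^\lambda$ over the sphere of radius $r$. Each $\sigma_r$ is radial, equals $r^\lambda$ at the origin, and is subharmonic on $B_r$ because $\Delta|z|^\lambda=\lambda(\lambda+N-2)\,|z|^{\lambda-2}>0$ on $\R^N\setminus\{0\}$ (here $N+\lambda-2>0$ is used), giving $0\le\sigma_r(x)-r^\lambda\le C\,|x|^\lambda$ always and $\sigma_r(x)-r^\lambda\le C\,r^{\lambda-2}\,|x|^2$ for $r>2|x|$ (second-order expansion of the spherical mean). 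Splitting the $r$-integral at $r=2|x|$, the inner part is $\le C\,|x|^\lambda\int_{|z|<2|x|}\rho_*\,dz=o(|x|^\lambda)$, while the outer part is $\le C\,|x|^2\int_{2|x|}^\infty\rho_*(r)\,r^{N+\lambda-3}\,dr$; since $\int_{r_0}^\infty\rho_*(r)\,r^{N+\lambda-3}\,dr<\infty$ and $\rho_*(r)\,r^N\to0$ as $r\to0$ (because $\rho_*\in\mathrm L^1$ is radial non-increasing), a short computation shows this is again $o(|x|^\lambda)$. Hence $h(x)-h(0)=o(|x|^\lambda)$.

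\textbf{Conclusion.} Combining the first and third steps, near the origin $U(x)-U(0)=M_*\,|x|^\lambda+O\big(|x|^{\min(\lambda,2)}\big)\le C\,|x|^{\min(\lambda,2)}$, so $\rho_*(x)\ge c\,|x|^{-\min(\lambda,2)/(1-q)}$ near $0$. Now $q>N/(N+\lambda)$ gives $\lambda>N(1-q)$, and the hypothesis on $(N,\lambda,q)$ in the Proposition, combined with $q>N/(N+\lambda)$, amounts exactly to $q\ge1-2/N$, i.e.\ $2\ge N(1-q)$; hence $\min(\lambda,2)\ge N(1-q)$, so $\min(\lambda,2)/(1-q)\ge N$ and $|x|^{-\min(\lambda,2)/(1-q)}$ is not integrable in any neighbourhood of the origin, contradicting $\rho_*\in\mathrm L^1(\R^N)$. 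Therefore $M_*=0$. The only genuinely technical point is the estimate $h(x)-h(0)=o(|x|^\lambda)$ when $1<\lambda<2$: one cannot expand $h$ to second order, and the crude decay bound $\rho_*(x)\lesssim|x|^{-\lambda/(1-q)}$ is itself useless near $0$ since $\lambda/(1-q)>N$; it is the subharmonicity of $|z|^\lambda$ together with the mild fact $\rho_*(r)\,r^N\to0$ coming from $\rho_*\in\mathrm L^1$ that makes the argument close.
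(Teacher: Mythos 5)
Your argument is correct, but it is not the route the paper takes for this statement. The paper's proof of Proposition~\ref{opt2} is a pure trial-function computation: assuming $M_*>0$, it redistributes a portion $\mu$ of the Dirac mass into a bump $\epsilon^{-N}\mu\,\sigma(\cdot/\epsilon)$ and compares the second-order gain in the denominator, of size $\epsilon^{N(1-q)}\mu^q$ (via the Br\'ezis--Lieb lemma, Lemma~\ref{bl}), against the loss $R_1=O(\epsilon^{\min\{2,\lambda\}}\mu)$ in the numerator; under the stated hypotheses $\min\{2,\lambda\}\ge N(1-q)$ and the gain wins (with a separate small-$\mu$ argument in the borderline case $2=N(1-q)$). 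You instead exploit the Euler--Lagrange equations — both the variation in $\rho$ and the stationarity in $M$, the latter available exactly because $M_*>0$ — to obtain the closed-form profile $\rho_*=K\,(U-U(0))^{-1/(1-q)}$ and then show $U(x)-U(0)\le C|x|^{\min\{\lambda,2\}}$, forcing a non-integrable singularity $\rho_*\gtrsim|x|^{-\min\{\lambda,2\}/(1-q)}$ with exponent $\ge N$. This is essentially the mechanism the paper deploys later, in Steps 1--3 of the proof of Proposition~\ref{prop:cases} (where, in the complementary regime $q<1-2/N$, the same analysis yields an \emph{integrable} $|x|^{-2/(1-q)}$ singularity and hence does not exclude $M_*>0$); you have in effect observed that in the parameter range of Proposition~\ref{opt2} the $M_*>0$ branch of that dichotomy is incompatible with $\rho_*\in\mathrm L^1$. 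What each approach buys: the paper's perturbation argument needs only upper bounds on $R_1$ and avoids justifying the Euler--Lagrange equation, the positivity $U(x)\ge U(0)+M_*|x|^\lambda$, and the asymptotics of $h(x)-h(0)$; your argument is heavier on these technical points (the estimate $h(x)-h(0)=o(|x|^\lambda)$ for $1<\lambda<2$ via subharmonicity of $|z|^\lambda$ and $\rho_*(r)\,r^N\to0$ is the genuinely delicate step, and I checked it closes), but in exchange it handles the borderline case $q=1-2/N$ uniformly and yields the precise blow-up profile as a by-product. All the individual steps (two-sided variations using Lemma~\ref{sym-pos}, the bathtub bound $h(x)\ge h(0)$, the splitting of the radial integral at $r=2|x|$, and the final case count $\min\{\lambda,2\}\ge N(1-q)$) are sound.
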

%-------------------------------------------------------------------------
Note that for $N\ge3$, we are implicitly assuming $\lambda<4N/(N-2)$ since otherwise the two assumptions $q<2N/(2N+\lambda)$ and $q\ge1-2/N$ cannot be simultaneously satisfied. For the proof of Proposition~\ref{opt2} we need the following lemma which identifies the sub-leading term in~\eqref{eq:limitq}.
%-------------------------------------------------------------------------
\begin{lemma}\label{bl}
Let $0<q<p$, let $f\in\mathrm L^p\cap\mathrm L^q(\R^N)$ be a symmetric non-increasing function and let $g\in\mathrm L^q(\R^N)$. Then, for any $\mu>0$, as $\epsilon\to 0_+$,
$$
\int_{\R^N}\left|f(x)+ \epsilon^{-N/p}\,\mu\,g(x/\epsilon)\right|^q\,dx = \int_{\R^N} f^q\,dx + \epsilon^{N(1-q/p)}\,\mu^q\int_{\R^N} |g|^q\,dx + o\(\epsilon^{N(1-q/p)}\,\mu^q\) \,.
$$
\end{lemma}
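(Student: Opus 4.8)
This is a Brezis--Lieb type asymptotics, and the plan is to remove the concentration by a scaling substitution and then pass to the limit by dominated convergence. Substituting $x=\epsilon y$ and writing $\phi_\epsilon(y):=\epsilon^{N/p}f(\epsilon y)$, one has $f(x)+\epsilon^{-N/p}\mu\,g(x/\epsilon)=\epsilon^{-N/p}\big(\phi_\epsilon(y)+\mu\,g(y)\big)$, whence
$$
\int_{\R^N}\big|f(x)+\epsilon^{-N/p}\mu\,g(x/\epsilon)\big|^q\,dx=\epsilon^{N(1-q/p)}\int_{\R^N}|\phi_\epsilon+\mu\,g|^q\,dy\,,
$$
and similarly $\int_{\R^N}f^q\,dx=\epsilon^{N(1-q/p)}\int_{\R^N}\phi_\epsilon^q\,dy$ and $\int_{\R^N}\big|\epsilon^{-N/p}\mu\,g(\cdot/\epsilon)\big|^q\,dx=\epsilon^{N(1-q/p)}\mu^q\int_{\R^N}|g|^q\,dy$. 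Since $\mu>0$ is fixed, the assertion is therefore equivalent to
$$
D_\epsilon:=\int_{\R^N}\big(|\phi_\epsilon+\mu\,g|^q-\phi_\epsilon^q-\mu^q|g|^q\big)\,dy\longrightarrow 0 \qquad(\epsilon\to0_+)\,.
$$

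Next I would record two elementary properties of $\phi_\epsilon$. Because $f$ is symmetric non-increasing and belongs to $\mathrm L^p(\R^N)$, the bound $|B_{|x|}|\,f(x)^p\le\int_{B_{|x|}}f^p\le\|f\|_p^p$ gives $f(x)\le C\,|x|^{-N/p}$, so $\phi_\epsilon(y)\le C\,|y|^{-N/p}$ uniformly in $\epsilon$, and since $q<p$ this majorant belongs to $\mathrm L^q_{\loc}(\R^N)$; the same computation over the annuli $B_{|x|}\setminus B_{|x|/2}$ gives $|x|^{N/p}f(x)\to0$ as $x\to0$, hence $\phi_\epsilon(y)=\big(|\epsilon y|^{N/p}f(\epsilon y)\big)\,|y|^{-N/p}\to0$ as $\epsilon\to0$ for every fixed $y\ne0$, so the integrand of $D_\epsilon$ tends to $0$ a.e. In the range $0<q\le1$ — the one needed for~\eqref{eq:limitq}, where $p=1$ — the subadditivity $|a+b|^q\le|a|^q+|b|^q$ yields $\big||\phi_\epsilon+\mu\,g|^q-\phi_\epsilon^q\big|\le\mu^q|g|^q$, so the integrand of $D_\epsilon$ is dominated in absolute value by $2\mu^q|g|^q\in\mathrm L^1(\R^N)$; dominated convergence gives $D_\epsilon\to0$, and undoing the scaling yields the stated expansion, the remainder being $\epsilon^{N(1-q/p)}D_\epsilon=o\big(\epsilon^{N(1-q/p)}\mu^q\big)$.

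For $1<q<p$ no fixed integrable majorant is available, and I would split $D_\epsilon=\int_{B_R}(\cdots)\,dy+\int_{\R^N\setminus B_R}(\cdots)\,dy$. On $B_R$ one uses, in addition to the above, that $\int_{B_\delta}f^q\,dx=o\big(\delta^{N(1-q/p)}\big)$ as $\delta\to0$ — a direct consequence of $|x|^{N/p}f(x)\to0$ — which after scaling says $\phi_\epsilon\to0$ in $\mathrm L^q(B_R)$; continuity of the $\mathrm L^q$-norm then gives $\int_{B_R}(\cdots)\,dy\to0$ for each fixed $R$. The remaining term, $\int_{\R^N\setminus B_R}(\cdots)\,dy$, is the genuinely delicate point and the only real obstacle: since the $\mathrm L^q$-mass of $\phi_\epsilon$ escapes to infinity as $\epsilon\to0$, this tail must be controlled by exploiting that $\phi_\epsilon$ is concentrated at large radii while $g$ is essentially concentrated near the origin (for instance via a Brezis--Lieb type inequality applied after truncating $g$), so that its contribution becomes small uniformly in $\epsilon$ once $R$ is large; letting $R\to\infty$ then completes the proof. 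For $0<q\le1$ this difficulty disappears, which is why the argument there reduces to a single application of dominated convergence.
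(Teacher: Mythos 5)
Your rescaling, the reduction to $D_\epsilon\to0$, and your treatment of the range $0<q\le1$ are correct and in substance identical to the paper's argument: the paper establishes the same two pointwise facts about $f$ (the bound $C\,|x|^{-N/p}$ and $|x|^{N/p}f(x)\to0$) and then invokes the Br\'ezis--Lieb lemma, which for exponents $q\le1$ is exactly the dominated convergence argument you wrote out, with majorant $2\,\mu^q|g|^q$ and no uniform $\mathrm L^q$ bound on the rescaled family required. Since the lemma is only ever applied in the paper with $q<1$ (with $p=1$ in Proposition~\ref{opt2} and $p=N\,(1-q)/2$ in Proposition~\ref{prop:reg}), this part of your proof covers everything that is actually used.

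For $1<q<p$ your proof is incomplete --- the tail integral over $\R^N\setminus B_R$ is only described, not estimated --- and this gap cannot be closed: the statement is false in that range, and your diagnosis that the escaping $\mathrm L^q$-mass of $\phi_\epsilon$ is the obstruction is exactly right. Take $f=\1_{B_1}$ and $g(y)=|y|^{-N/q}\big(\log|y|\big)^{-2/q}\,\1_{\{|y|\ge2\}}\in\mathrm L^q(\R^N)$. For $a,b\ge0$ and $q>1$ one has $(a+b)^q-a^q=\int_a^{a+b}q\,t^{q-1}\,dt\ge q\,a^{q-1}\,b$, hence
$$
D_\epsilon\ \ge\ q\,\mu\int_{\R^N}\phi_\epsilon^{q-1}\,g\,dy-\mu^q\int_{\R^N}g^q\,dy
\ =\ q\,\mu\,\epsilon^{N(q-1)/p}\int_{\{2\le|y|\le1/\epsilon\}}g\,dy-\mu^q\int_{\R^N}g^q\,dy\,,
$$
and $\int_{\{2\le|y|\le R\}}g\,dy\ge c\,R^{N(q-1)/q}\,(\log R)^{-2/q}$ for large $R$, so the first term on the right is bounded below by $c\,q\,\mu\,\epsilon^{-N(q-1)(1/q-1/p)}\big(\log(1/\epsilon)\big)^{-2/q}\to+\infty$ because $q<p$. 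Thus $D_\epsilon\to+\infty$ and the claimed expansion fails. (The paper's own one-line appeal to Br\'ezis--Lieb suffers from the same defect when $q>1$: the hypothesis $\sup_\epsilon\int_{\R^N}|\epsilon^{N/p}f(\epsilon\,\cdot)|^q\,dx<\infty$ needed there fails, since this quantity equals $\epsilon^{-N(1-q/p)}\int_{\R^N}f^q\,dx$.) The correct fix is simply to restrict the lemma, and your proof, to $0<q\le1$, which suffices for all of its applications.
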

%-------------------------------------------------------------------------
\begin{proof}[Proof of Lemma~\ref{bl}]
We first note that
\begin{equation}
\label{eq:fsingularity}
f(x) = o\(|x|^{-N/p}\)
\quad\text{as}\quad x\to 0
\end{equation}
in the sense that for any $c>0$ there is a $r>0$ such that for all $x\in\R^N$ with $|x|\le r $ one has $f(x)\le c\,|x|^{-N/p}$. To see this, we note that, since $f$ is symmetric non-increasing,
$$
f(x)^p\le\frac{1}{\left|\left\{ y\in\R^N:\ |y|\le|x|\right\}\right|} \int_{|y|\le|x|} f(y)^p\,dy \,.
$$
The bound~\eqref{eq:fsingularity} now follows by dominated convergence.

It follows from~\eqref{eq:fsingularity} that, as $\epsilon\to 0_+$,
$$
\epsilon^{N/p} f(\epsilon x) \to 0
\quad\text{for any}\quad x\in\R^N \,,
$$
and therefore, in particular, $\mu\,g(x) + \epsilon^{N/p} f(\epsilon x)\to \mu\,g(x)$ for any $x\in\R^N$. From the Br\'ezis--Lieb lemma (see~\cite{MR699419}) we know that
$$
\int_{\R^N} \left|\mu\,g(x) + \epsilon^{N/p} f(\epsilon x)\right|^q \,dx = \mu^q\int_{\R^N} |g(x)|^q \,dx + \int_{\R^N} \( \epsilon^{N/p} f(\epsilon x)\)^q \,dx + o(1) \,.
$$
By scaling this is equivalent to the assertion of the lemma. 
\end{proof}

\begin{proof}[Proof of Proposition~\ref{opt2}]
We argue by contradiction and assume that $M_*>0$. Let $0\le\sigma\in\(\mathrm L^1\cap\mathrm L^q\(\R^N\)\)\cap\mathrm L^1\(\R^N,|x|^\lambda\,dx\)$ with $\int_{\R^N} \sigma \,dx =1$. We compute the value of
$$
\mathcal Q[\rho,M] = \frac{I_\lambda[\rho] + 2M\Jlambda\rho x}{\left( \int_{\R^N} \rho(x)\,dx +M \right)^\alpha\left( \int_{\R^N} \rho(x)^q\,dx \right)^{(2-\alpha)/q}}
$$
for the family $(\rho,M)=\(\rho_* + \epsilon^{-N} \mu\,\sigma(\cdot/\epsilon),M^*-\mu\)$ with a parameter $\mu<M_*$.

%\smallskip
\noindent 1) We have
\begin{multline*}
I_\lambda\left[\rho_* + \epsilon^{-N} \mu\,\sigma(\cdot/\epsilon)\right] + 2\,(M_*-\mu) \int_{\R^N} |x|^\lambda\left(\rho_*(x) + \epsilon^{-N} \mu\,\sigma(x/\epsilon)\right) dx \\
= I_\lambda[\rho_*] + 2\,M_*\Jlambda{\rho_*}x+ R_1
\end{multline*}
with
\begin{multline*}
R_1 = 2\,\mu \iint_{\R^N\times\R^N} \rho_*(x)\left( |x-y|^\lambda - |x|^\lambda\right) \epsilon^{-N} \sigma(y/\epsilon)\,dx\,dy \\
+ \epsilon^\lambda\,\mu^2\,I_\lambda[\sigma] + 2\,(M_*-\mu)\,\mu\,\epsilon^\lambda\Jlambda\sigma x\,.
\end{multline*}
Let us show that $R_1=O\(\epsilon^\beta\,\mu\)$ with $\beta:=\min\{2,\lambda\}$. This is clear for the last two terms in the definition of $R_1$, so it remains to consider the double integral. If $\lambda\le1$ we use the simple inequality $|x-y|^\lambda-|x|^\lambda\le|y|^\lambda$ to conclude that
$$
\iint_{\R^N\times\R^N} \rho_*(x)\left( |x-y|^\lambda - |x|^\lambda\right) \epsilon^{-N} \sigma(y/\epsilon)\,dx\,dy\le\epsilon^\lambda\Jlambda\sigma x\int_{\R^N}\rho_*\,dx\,.
$$
If $\lambda>1$ we use the fact that, with a constant $C$ depending only on $\lambda$,
$$
\label{eq:elemineq}
|x-y|^\lambda - |x|^\lambda\le -\lambda |x|^{\lambda-2} x\cdot y + C\left(|x|^{ (2-\lambda)_+ } |y|^\beta + |y|^\lambda \right).
$$
Since $\rho_*$ is radial, we obtain
\begin{multline*}
\iint_{\R^N\times\R^N} \rho_*(x)\left( |x-y|^\lambda - |x|^\lambda\right) \epsilon^{-N} \sigma(y/\epsilon)\,dx\,dy \\
\le C\left( \epsilon^\beta \int_{\R^N} |x|^{(2-\lambda)_+} \rho_*(x)\,dx \int_{\R^N} |y|^\beta \sigma(y)\,dy + \epsilon^\lambda\Jlambda\sigma x\int_{\R^N}\rho_*(x)\,dx\right).
\end{multline*}
Using H\"older's inequality and the fact that $\rho_*$, $\sigma\in\mathrm L^1\(\R^N\)\cap\mathrm L^1\(\R^N,|x|^\lambda\,dx\)$ it is easy to see that the integrals on the right side are finite, so indeed $R_1=O\(\epsilon^\beta\,\mu\)$.

%\smallskip
\noindent 2) For the terms in the denominator of $\mathcal Q[\rho,M]$ we note that
$$
\int_{\R^N}\left( \rho_*(x) + \epsilon^{-N} \mu\,\sigma(x/\epsilon) \right)dx + (M_*-\mu) = \int_{\R^N} \rho_*\,dx + M_*
$$
and, by Lemma~\ref{bl} applied with $p=1$,
$$
\int_{\R^N}\left( \rho_*(x) + \epsilon^{-N} \mu\,\sigma(x/\epsilon) \right)^q\,dx = \int_{\R^N} \rho_*^q\,dx + \epsilon^{N\,(1-q)} \mu^q \int_{\R^N} \sigma^q\,dx + o\(\epsilon^{N\,(1-q)}\mu^q\) \,. 
$$
Thus,
\begin{multline*}
\left( \int_{\R^N}\left( \rho_*(x) + \epsilon^{-N} \mu\,\sigma(x/\epsilon) \right)^q\,dx \right)^{-\frac{2-\alpha}q}\\
=\left( \int_{\R^N} \rho_*^q\,dx \right)^{-\frac{2-\alpha}q}\left( 1- \frac{2-\alpha}q\,\epsilon^{N\,(1-q)} \mu^q\,\frac{\int_{\R^N} \sigma^q\,dx}{\int_{\R^N} \rho_*^q\,dx} + R_2 \right)
\end{multline*}
with $R_2= o\(\epsilon^{N\,(1-q)}\mu^q\)$.

\smallskip Now we collect the estimates. 
Since $(\rho_*,M_*)$ is a minimizer, we obtain that
\begin{multline*}
\mathcal Q\left[\rho_*+\epsilon^{-N} \mu\,\sigma(\cdot/\epsilon),M_*-\mu\right] = \mathcal C_{N,\lambda,q}\left( 1- \frac{2-\alpha}{q}\,\epsilon^{N\,(1-q)} \mu^q\,\frac{\int_{\R^N} \sigma^q\,dx}{\int_{\R^N} \rho_*^q\,dx} + R_2 \right) \\
+R_1\left( \int_{\R^N} \rho_*\,dx + M_* \right)^{-\alpha}\left( \int_{\R^N}\left( \rho_*(x) + \epsilon^{-N} \mu\,\sigma(x/\epsilon) \right)^q\,dx \right)^{-\frac{2-\alpha}q} \,.
\end{multline*}
If $\beta=\min\{2,\lambda\}>N\,(1-q)$, we can choose $\mu$ to be a fixed number in $(0,M_*)$, so that $R_1 =o\(\epsilon^{N\,(1-q)}\)$ and therefore
$$
\mathcal Q\left[\rho_*+\epsilon^{-N} \mu\,\sigma(\cdot/\epsilon),M_*-\mu\right]
\le\mathcal C_{N,\lambda,q}\left( 1- \frac{2-\alpha}q\,\epsilon^{N\,(1-q)} \mu^q\,\frac{\int_{\R^N} \sigma^q\,dx}{\int_{\R^N} \rho_*^q\,dx} + o\(\epsilon^{N\,(1-q)}\) \right).
$$
Since $\alpha<2$, this is strictly less than $\mathcal C_{N,\lambda,q}$ for $\epsilon>0$ small enough, contradicting the definition of $\mathcal C_{N,\lambda,q}$ as an infimum. Thus, $M_*=0$.

Note that if either $N=1$, $2$ or $N\ge3$ and $\lambda\le2N/(N-2)$, then the assumption $q>N/(N+\lambda)$ implies that $\beta>N\,(1-q)$. If $N\ge 3$ and $\lambda>2N/(N-2)$, then $\beta=2\ge N\,(1-q)$ by assumption. Thus, it remains to deal with the case where $N\ge 3$, $\lambda>2N/(N-2)$ and $2=N\,(1-q)$. In this case we have $R_1 = O\(\epsilon^2\,\mu\)$ and therefore
$$
\mathcal Q\left[\rho_*+\epsilon^{-N} \mu\,\sigma(\cdot/\epsilon),M_*-\mu\right]
\le\mathcal C_{N,\lambda,q}\left( 1- \frac{2-\alpha}q\,\epsilon^2\,\mu^q\,\frac{\int_{\R^N} \sigma^q\,dx}{\int_{\R^N} \rho_*^q\,dx} + O\(\epsilon^2\,\mu\) \right).
$$
By choosing $\mu$ small (but independently of $\epsilon$) we obtain a contradiction as before. This completes the proof of the proposition.
\end{proof}
%---------------------------------------------------------------------------
\begin{remark}\label{rem:taylor}
In the proof of Proposition~\ref{opt2}, we used the bound $R_1=O\(\epsilon^2\,\mu\)$. For any $\lambda\ge2$, this bound is optimal. Namely, one has
\begin{multline*}
\iint_{\R^N\times\R^N} \rho_*(x)\left(|x-y|^\lambda - |x|^\lambda \right) \epsilon^{-N} \sigma(y/\epsilon)\,dx\,dy \\
= \epsilon^2 \ \frac{\lambda}{2}\left( 1+ \frac{\lambda-2}{N} \right) \int_{\R^N} |x|^{\lambda-2}\rho_*(x)\,dx \int_{\R^N} |y|^2 \sigma(y)\,dy + o\(\epsilon^2\)
\end{multline*}
for $\lambda\ge2$. This follows from the fact that, for any given $x\neq0$,
$$
|x-y|^\lambda - |x|^\lambda = -\lambda\,|x|^{\lambda-2} x\cdot y + \frac{\lambda}{2}\,|x|^{\lambda-2}\left( |y|^2 + (\lambda-2) \frac{(x\cdot y)^2}{|x|^2} \right) + O\(|y|^{\min\{3,\lambda\}} + |y|^\lambda\)\,.
$$
The assumption $\beta \geq N (1-q)$ is dictated by the $\epsilon^2$ behavior of $R_1$, for $\lambda\geq 2$, which cannot be improved.
\end{remark}
%-------------------------------------------------------------------------

%%%%%%%%%%%%%%%%%%%%%%%%%%%%%%%%%%%%%%%%%%%%%%%%%%%%%%%%%%%%%%%%%%%%%%%%%%
%%%%%%%%%%%%%%%%%%%%%%%%%%%%%%%%%%%%%%%%%%%%%%%%%%%%%%%%%%%%%%%%%%%%%%%%%%
\section{Additional results}\label{Sec:AdditionalResults}

In this section we discuss the existence of a minimizer in the regime that is not covered by Proposition~\ref{opt2}. In particular, we will find a connection between the regularity of a minimizer of the relaxed problem and the presence or absence of a delta mass, and we will also establish the existence of a minimizer in a certain region which is not covered by Proposition~\ref{opt2}.
%-------------------------------------------------------------------------
\begin{proposition}\label{prop:reg}
Let $N\ge3$, $\lambda>2N/(N-2)$ and $N/(N+\lambda)<q<\min\big\{1-2/N\,,\, 2N/(2N+\lambda)\big\}$. If $(\rho_*,M_*)$ is a minimizer for $\mathcal C_{N,\lambda,q}^{\rm{rel}}$ such that $(\rho_*,M_*) \in \mathrm L^{N\,(1-q)/2}(\R^N)\times[0,+\infty) $, then $M_*=0$.
\end{proposition}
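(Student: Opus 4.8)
The plan is to mimic the contradiction argument in the proof of Proposition~\ref{opt2}, perturbing a putative minimizer $(\rho_*,M_*)$ with $M_*>0$ by transferring a small mass $\mu\in(0,M_*)$ from the Dirac part into a concentrating bump $\epsilon^{-N}\mu\,\sigma(\cdot/\epsilon)$, and showing that the quotient $\mathcal Q$ strictly decreases for suitable $\epsilon$, $\mu$. The obstruction in Proposition~\ref{opt2} was that in the borderline regime ($\lambda>2N/(N-2)$ and $2=N(1-q)$, or more generally $\beta=\min\{2,\lambda\}\le N(1-q)$) the remainder $R_1$ is only $O(\epsilon^2\mu)$, which competes on equal footing with the favorable term $-\tfrac{2-\alpha}{q}\,\epsilon^{N(1-q)}\mu^q\,\frac{\int\sigma^q}{\int\rho_*^q}$ coming from the $\mathrm L^q$ denominator; choosing $\mu$ small fixed the case $2=N(1-q)$ but not $2<N(1-q)$, i.e. $q<1-2/N$. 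The new hypothesis $\rho_*\in\mathrm L^{N(1-q)/2}(\R^N)$ is precisely what is needed to upgrade the $\mathrm L^q$ denominator expansion.

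First I would revisit the denominator term. In the proof of Proposition~\ref{opt2} we only used Lemma~\ref{bl} with $p=1$, which gives the expansion of $\int(\rho_*+\epsilon^{-N}\mu\,\sigma(\cdot/\epsilon))^q$ at order $\epsilon^{N(1-q)}$. Instead I would apply Lemma~\ref{bl} with $p=N(1-q)/2$: the hypothesis $\rho_*\in\mathrm L^{p}\cap\mathrm L^q(\R^N)$ is exactly what Lemma~\ref{bl} requires (note $q<1-2/N=2/N\cdot(N/2)$... more precisely $q<p$ is equivalent to $q<N(1-q)/2$, i.e. $q<1-2/N$, which holds in this regime), and the bump should be rescaled as $\epsilon^{-N/p}\mu\,\sigma(\cdot/\epsilon)$. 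This produces a favorable term of order $\epsilon^{N(1-q/p)}=\epsilon^{N-2}$ from the $\mathrm L^q$ norm. Simultaneously the $\mathrm L^1$ term is now $\int\rho_*\,dx+\mu\,\epsilon^{N-N/p}=\int\rho_*\,dx+\mu\,\epsilon^{N-2/(1-q)}$, which is no longer exactly constant, so I would need to expand $(\int\rho_*+M_*+\mu\,\epsilon^{N(1-1/p)})^{-\alpha}=(\int\rho_*+M_*)^{-\alpha}(1-\alpha\,\mu\,\epsilon^{N-2/(1-q)}/(\int\rho_*+M_*)+\dots)$, contributing a term whose sign depends on that of $\alpha$ (here $\alpha>0$, so this is also favorable since it enters with a minus sign). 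The main point is to recompute $R_1$: with the new scaling the bump's second moment is $\int|\epsilon^{-N/p}\sigma(y/\epsilon)|\,|y|^\lambda\,dy=\epsilon^{\lambda+N-N/p}\int\sigma|y|^\lambda$, the self-interaction $I_\lambda$ of the bump is of order $\epsilon^{\lambda+2N-2N/p}\mu^2$, and the cross term $\iint\rho_*(x)(|x-y|^\lambda-|x|^\lambda)\epsilon^{-N/p}\sigma(y/\epsilon)\,dx\,dy$ is, by the same Taylor expansion as in Proposition~\ref{opt2} (using radiality of $\rho_*$ to kill the linear term and $\rho_*\in\mathrm L^1(|x|^{(2-\lambda)_+}dx)\cap\mathrm L^1(|x|^\lambda dx)$), of order $\mu\,\epsilon^{\min\{2,\lambda\}+N-N/p}=\mu\,\epsilon^{2+N-2/(1-q)}$ since $\lambda>2$ here. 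Hence $R_1=O(\mu\,\epsilon^{N-2/(1-q)+2})$.

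The comparison is then: the favorable term is of order $\epsilon^{N-2}\mu^q$, while $R_1$, after dividing by the denominator factors, contributes at order $\epsilon^{N-2/(1-q)+2}\mu$. Since $q<1-2/N$ we have $N(1-q)>2$, equivalently $2/(1-q)>N$... let me just compare exponents: we need $N-2<N-2/(1-q)+2$, i.e. $2/(1-q)<4$, i.e. $q<1/2$ — this is not automatic. So the cleaner route, exactly as in the end of Proposition~\ref{opt2}, is to \emph{first fix $\epsilon$ small and then choose $\mu$ small}: the favorable term scales like $\mu^q$ and the error like $\mu$ with $q<1$, so for $\mu$ sufficiently small (depending on $\epsilon$, but that is harmless) the favorable $\mu^q$ term dominates, giving $\mathcal Q[\rho_*+\epsilon^{-N/p}\mu\,\sigma(\cdot/\epsilon),M_*-\mu]<\mathcal C_{N,\lambda,q}$, a contradiction. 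I expect the main technical obstacle to be verifying that all the integrals of $\rho_*$ appearing in the expansion of $R_1$ — namely $\int|x|^{(2-\lambda)_+}\rho_*\,dx=\int\rho_*\,dx$ (as $\lambda>2$) and $\int|x|^\lambda\rho_*\,dx$ — are finite; the latter follows because $\rho_*$ is a minimizer (hence radial non-increasing with the decay $\rho_*(x)\le C|x|^{-N/q}$ from Proposition~\ref{opt1}, and $N/q>N+\lambda$ is equivalent to $q<N/(N+\lambda)$... which is \emph{false} here), so instead finiteness of $\int|x|^\lambda\rho_*\,dx$ must be extracted from $I_\lambda[\rho_*]<\infty$ together with the $\mathrm L^1$ bound, exactly as in the proof of Proposition~\ref{opt2}; this is the one spot requiring care, and the rest is a bookkeeping of powers of $\epsilon$ and $\mu$.
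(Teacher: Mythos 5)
Your central idea is the right one and is the same as the paper's: the hypothesis $\rho_*\in\mathrm L^{p}(\R^N)$ with $p=N(1-q)/2$ is there precisely so that Lemma~\ref{bl} can be applied with this $p$, which forces the bump to be rescaled as $\epsilon^{-N/p}\mu\,\sigma(\cdot/\epsilon)$. However, there are two genuine gaps in the execution. First, your perturbation does not conserve mass: you remove a fixed $\mu$ from the Dirac part while the bump only carries mass $\mu\,\epsilon^{N-N/p}\to0$, and your bookkeeping of the $\mathrm L^1$ term is moreover incorrect (you write $\int\rho_*+M_*+\mu\,\epsilon^{N(1-1/p)}$, omitting the $-\mu$ coming from $M_*-\mu$; the net change is $-\mu\,(1-\epsilon^{N-N/p})$, not $+\mu\,\epsilon^{N-N/p}$), and you ignore the matching $O(\mu)$ changes in the numerator. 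If one does this bookkeeping correctly, the first-order-in-$\mu$ terms cancel because $M\mapsto\mathcal Q[\rho_*,M]$ is stationary at the interior minimum $M_*>0$, but the second derivative is strictly positive (Lemma~\ref{lem:minM}), leaving an \emph{unfavorable} term of size $c\,\mu^2$ carrying no power of $\epsilon$. For fixed $\mu$ this term survives as $\epsilon\to0$ while the favorable term vanishes, so your version of the perturbation fails. The paper avoids this entirely by transferring from the Dirac exactly the mass of the bump, i.e.\ by taking the pair $\big(\rho_*+\epsilon^{-N}\mu_\epsilon\,\sigma(\cdot/\epsilon),\,M_*-\mu_\epsilon\big)$ with $\mu_\epsilon=\mu_1\,\epsilon^{N-2/(1-q)}$, so that $\int\rho+M$ is exactly unchanged.

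Second, your exponent count for the favorable term is off: with $p=N(1-q)/2$ one has $N(1-q/p)=N-2q/(1-q)$, not $N-2$. Computed correctly, this exponent equals $\gamma:=N+2-2/(1-q)=\big(N-q(N+2)\big)/(1-q)>0$, which is \emph{the same} power of $\epsilon$ as the error $R_1=O(\mu\,\epsilon^{2+N-2/(1-q)})$; so there is no spurious condition $q<1/2$, and the competition is decided purely by the coefficients $\mu_1^q$ versus $\mu_1$. This must be exploited with the quantifiers in the order the paper uses (and in the order used at the end of the proof of Proposition~\ref{opt2}): choose $\mu_1$ small \emph{independently of} $\epsilon$ so that $\frac{2-\alpha}{q}\,\mu_1^q\,\frac{\int\sigma^q\,dx}{\int\rho_*^q\,dx}$ beats the $O(\mu_1)$ coefficient, and then let $\epsilon\to0$. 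Your proposal to ``first fix $\epsilon$ small and then choose $\mu$ small (depending on $\epsilon$)'' reverses this and is not legitimate: for fixed $\epsilon$ the perturbation of $\int\rho^q\,dx$ by $\mu$ times a fixed function is of order $\mu$ (since $\rho_*>0$ a.e.\ by Lemma~\ref{sym-pos}), not of order $\mu^q$, and Lemma~\ref{bl} (whose error term is $o(\cdot)$ as $\epsilon\to0$ for fixed $\mu$) gives no information in that regime. With the mass-conserving test pair and the correct order of limits, the rest of your outline (the Taylor expansion of $|x-y|^\lambda-|x|^\lambda$ using radiality, and the finiteness of $\int|x|^\lambda\rho_*\,dx$ extracted from $I_\lambda[\rho_*]<\infty$) is sound.
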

%-------------------------------------------------------------------------
The condition that the. minimizer $(\rho_*,M_*)$ of $\mathcal C_{N,\lambda,q}^{\rm{rel}}$ belongs to $\mathrm L^{N\,(1-q)/2}(\R^N)\times[0,+\infty)$ has to be understood as a regularity condition on $\rho_*$.
\begin{proof}
We argue by contradiction assuming that $M_*>0$ and consider a test function $\(\rho_* + \epsilon^{-N} \mu_\epsilon\,\sigma(\cdot/\epsilon),M^*-\mu_\epsilon\)$ such that $\irN\sigma=1$. We choose $\mu_\epsilon = \mu_1\,\epsilon^{N-2/(1-q)}$ with a constant $\mu_1$ to be determined below. As in the proof of Proposition~\ref{opt2}, one has
\begin{multline*}
I_\lambda\left[\rho_* + \epsilon^{-N} \mu_\epsilon\,\sigma(\cdot/\epsilon)\right] + 2(M_*-\mu_\epsilon) \int_{\R^N} |x|^\lambda\left(\rho_*(x) + \epsilon^{-N} \sigma(x/\epsilon)\right) dx \\
= I_\lambda[\rho_*] + 2M_*\Jlambda{\rho_*}x+ R_1
\end{multline*}
with $R_1=O\(\epsilon^2 \mu_\epsilon\)$. Note that here we have $\lambda\ge2$. For the terms in the denominator we note that
$$
\int_{\R^N}\left( \rho_*(x) + \epsilon^{-N} \mu_\epsilon\,\sigma(x/\epsilon) \right)dx + (M_*-\mu_\epsilon) = \int_{\R^N} \rho_*\,dx + M_*
$$
and, by Lemma~\ref{bl} applied with $p=N\,(1-q)/2$ and $\mu=\mu_\epsilon$, \emph{i.e.}, $\epsilon^{-N}\mu_\epsilon=\epsilon^{-N/p}\mu_1$, we have
$$
\int_{\R^N}\left( \rho_*(x) + \epsilon^{-N} \mu_\epsilon\,\sigma(x/\epsilon) \right)^q\,dx = \int_{\R^N} \rho_*^q\,dx + \epsilon^{N\,(1-q)} \mu_\epsilon^q \int_{\R^N} \sigma^q\,dx + o\(\epsilon^{N\,(1-q)}\mu_\epsilon^q\) \,. 
$$
Because of the choice of $\mu_\epsilon$ we have
$$
\epsilon^{N\,(1-q)} \mu_\epsilon^q = \epsilon^\gamma \mu_1^q
\quad\text{and}\quad
\epsilon^2 \mu_\epsilon = \epsilon^\gamma \mu_1
\quad\text{with}\quad\gamma := \frac{N-q\,(N+2)}{1-q}>0
$$
and thus
$$
\mathcal Q\left[\rho_*+\epsilon^{-N} \mu_\epsilon\,\sigma(\cdot/\epsilon),M_*-\mu_\epsilon\right]
\le\mathcal C_{N,\lambda,q}\left( 1- \frac{2-\alpha}q\,\epsilon^\gamma \mu_1^q\,\frac{\int_{\R^N} \sigma^q\,dx}{\int_{\R^N} \rho_*^q\,dx} + O\(\epsilon^\gamma \mu_1\) \right).
$$
By choosing $\mu_1$ small (but independent of $\epsilon$) we obtain a contradiction as before.
\end{proof}

Proposition~\ref{prop:reg} motivates the investigation of the regularity of the minimizer $(\rho_*,M_*)$ of $\mathcal C_{N,\lambda,q}^{\rm{rel}}$. We are not able to prove the regularity required in Proposition~\ref{prop:reg}, but we can state a dichotomy result which is interesting by itself.
%-------------------------------------------------------------------------
\begin{proposition}\label{prop:cases}
Let $N\ge3$, $\lambda>2N/(N-2)$ and $N/(N+\lambda)<q<\min\big\{1-2/N\,,\,2N/(2N+\lambda)\big\}$. Let $(\rho_*,M_*)$ be a minimizer for $\mathcal C_{N,\lambda,q}^{\rm{rel}}$. Then the following holds:
\begin{enumerate}
\item If $\int_{\R^N}\rho_*\,dx > \frac{\alpha}{2}\,\frac{I_\lambda[\rho_*]}{\Jlambda{\rho_*}x}$, then $M_*=0$ and $\rho_*$ is bounded with
$$
\rho_*(0) =\left( \frac{(2-\alpha) I_\lambda[\rho_*] \int_{\R^N} \rho_*\,dx}{\left(\int_{\R^N} \rho_*^q\,dx\right)\left(2\Jlambda{\rho_*}x\int_{\R^N} \rho_*\,dx - \alpha I_\lambda[\rho_*]\right)} \right)^{1/(1-q)}
$$
\item If $\int_{\R^N}\rho_*\,dx = \frac{\alpha}{2}\,\frac{I_\lambda[\rho_*]}{\Jlambda{\rho_*}x}$, then $M_*=0$ and $\rho_*$ is unbounded with
$$
\rho_*(x) = C\,|x|^{-2/(1-q)} \,\big(1+o(1)\big)
\quad\text{as}\quad x\to 0
$$
for some $C>0$.
\item If $\int_{\R^N}\rho_*\,dx < \frac{\alpha}{2}\,\frac{I_\lambda[\rho_*]}{\Jlambda{\rho_*}x}$, then
$$
M_* = \frac{\alpha I_\lambda[\rho_*] - 2\Jlambda{\rho_*}x\ \int_{\R^N} \rho_*\,dx }{2\,(1-\alpha)\Jlambda{\rho_*}x}>0
$$
and $\rho_*$ is unbounded with
$$
\rho_*(x) = C\,|x|^{-2/(1-q)} \,\big(1+o(1)\big)
\quad\text{as}\quad x\to 0
$$
for some $C>0$.
\end{enumerate} 
\end{proposition}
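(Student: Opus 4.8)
The plan is to derive and exploit the Euler--Lagrange conditions satisfied by a minimizer $(\rho_*,M_*)$ of $\mathcal C_{N,\lambda,q}^{\rm rel}$. Write $J_*:=\Jlambda{\rho_*}x$, and note that $0<\irN{\rho_*}<\infty$, $0<\irN{\rho_*^q}<\infty$, $I_\lambda[\rho_*]<\infty$, and that $I_\lambda[\rho_*]\ge J_*\,\irN{\rho_*}$ forces $J_*<\infty$. By Lemma~\ref{sym-pos}, $\rho_*$ is radial, non-increasing and positive almost everywhere, hence bounded below by a positive constant on each ball, so that $\rho_*+\epsilon\,\phi$ with $\phi\in\mathrm L^\infty(\R^N)$ of compact support is admissible for $|\epsilon|$ small. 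Introduce the potential $V_*(x):=\int_{\R^N}|x-y|^\lambda\,\rho_*(y)\,dy+M_*\,|x|^\lambda$ and the number $\mathcal N:=I_\lambda[\rho_*]+2\,M_*\,J_*$. Differentiating $\log\mathcal Q[\rho_*+\epsilon\phi,M_*]$ at $\epsilon=0$ and using that $\phi$ is arbitrary gives, for almost every $x\in\R^N$,
\be{eq:ELcases}
\frac{(2-\alpha)\,\rho_*(x)^{q-1}}{\irN{\rho_*^q}}=\frac{2\,V_*(x)}{\mathcal N}-\frac{\alpha}{\irN{\rho_*}+M_*}\,;
\ee
differentiating $M\mapsto\mathcal Q[\rho_*,M]$ at $M=M_*$ (a one-sided derivative when $M_*=0$) gives either $M_*=0$ together with $2\,J_*\,\irN{\rho_*}\ge\alpha\,I_\lambda[\rho_*]$, or else $M_*>0$ together with the identity $2\,J_*\,\irN{\rho_*}-\alpha\,I_\lambda[\rho_*]+2\,(1-\alpha)\,M_*\,J_*=0$, which is equivalent to the formula for $M_*$ in part~(3).

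Next I would analyze the right-hand side of~\eqref{eq:ELcases} near the origin. Since $\rho_*\in\mathrm L^1(\R^N)$ and $J_*<\infty$, the potential $V_*$ is continuous near $0$ with $V_*(0)=J_*$; write $L:=\tfrac{2\,J_*}{\mathcal N}-\tfrac{\alpha}{\irN{\rho_*}+M_*}$ for the corresponding limit of the right-hand side of~\eqref{eq:ELcases} as $x\to0$. Because $\lambda>2N/(N-2)>2$, monotonicity gives $\rho_*(y)\le C\,|y|^{-N}$ near $0$, hence $\int_{\R^N}|y|^{\lambda-2}\rho_*(y)\,dy<\infty$ (integrability near $0$ uses $\lambda>2$, at infinity uses $J_*<\infty$), and also $M_*\,|x|^\lambda=o(|x|^2)$. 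A second-order Taylor expansion of $x\mapsto|x-y|^\lambda$ at $x=0$, with the remainder handled by dominated convergence via the above integrability, $\rho_*\in\mathrm L^1(\R^N)$ and $\lambda>2$, together with the radial symmetry of $\rho_*$, then yields
$$
V_*(x)=J_*+c\,|x|^2+o(|x|^2)\quad\text{as }x\to0,\qquad c:=\frac{\lambda\,(\lambda+N-2)}{2\,N}\int_{\R^N}|y|^{\lambda-2}\rho_*(y)\,dy>0\,.
$$

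Now I would combine the two ingredients. Since the left-hand side of~\eqref{eq:ELcases} is strictly positive, letting $x\to0$ forces $L\ge0$. A short computation gives the identity $\big(\irN{\rho_*}+M_*\big)\,\mathcal N\,L=2\,J_*\,\irN{\rho_*}-\alpha\,I_\lambda[\rho_*]+2\,(1-\alpha)\,M_*\,J_*$; hence if $M_*>0$ the $M$-equation forces $L=0$, whereas if $M_*=0$ then $L$ has the sign of $\irN{\rho_*}-\tfrac\alpha2\,I_\lambda[\rho_*]/J_*$. This produces the trichotomy. In part~(1), the hypothesis $\irN{\rho_*}>\tfrac\alpha2\,I_\lambda[\rho_*]/J_*$ excludes $M_*>0$ (positivity of the formula for $M_*$ would force the reverse inequality), so $M_*=0$ and $L>0$; then~\eqref{eq:ELcases} shows that $x\mapsto\rho_*(x)^{q-1}$ agrees near $0$ with a continuous function bounded there between positive constants, so $\rho_*$ is bounded (everywhere, by monotonicity) with $\rho_*(0)^{q-1}=\tfrac{\irN{\rho_*^q}}{2-\alpha}\,L$, which rearranges into the value stated in part~(1). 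In part~(2), the hypothesis again excludes $M_*>0$ (the formula would give $M_*=0$), so $M_*=0$ and $L=0$; then~\eqref{eq:ELcases} reads $\rho_*(x)^{q-1}=\tfrac{2\,c\,\irN{\rho_*^q}}{(2-\alpha)\,\mathcal N}\,|x|^2\,(1+o(1))$, that is $\rho_*(x)=C\,|x|^{-2/(1-q)}\,(1+o(1))$ for some $C>0$, so $\rho_*$ is unbounded. In part~(3), the hypothesis excludes $M_*=0$ (it would force $L<0$), so $M_*>0$ is given by the $M$-equation, which is positive precisely in this regime, and $L=0$ again yields $\rho_*(x)=C\,|x|^{-2/(1-q)}\,(1+o(1))$. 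Finally, the exponent $-2/(1-q)$ is consistent with $\rho_*\in\mathrm L^1\cap\mathrm L^q(\R^N)$ since $q<1-2/N$ gives $|x|^{-2/(1-q)}\in\mathrm L^1_{\loc}(\R^N)$ and $q<(N-2)/N<N/(N+2)$ gives $|x|^{-2q/(1-q)}\in\mathrm L^1_{\loc}(\R^N)$.

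The main obstacle is the rigorous derivation of~\eqref{eq:ELcases}: one must justify that the competitors $\rho_*+\epsilon\phi$ probe every direction $\phi$ — this is where positivity almost everywhere from Lemma~\ref{sym-pos} is essential, since it makes $\rho_*+\epsilon\phi$ admissible for small $|\epsilon|$ on each ball — that the relevant integrals may be differentiated in $\epsilon$ (using finiteness of $I_\lambda[\rho_*]$, $J_*$, $\irN{\rho_*^q}$ and local boundedness of $\rho_*^{q-1}$ on $\supp\phi$), and that the resulting linear functional in $\phi$ localizes to a pointwise identity. The other delicate point is the integrability $\int_{\R^N}|y|^{\lambda-2}\rho_*(y)\,dy<\infty$ underlying the second-order expansion of $V_*$, which is exactly where the hypothesis $\lambda>2$ — guaranteed by $\lambda>2N/(N-2)$ — is indispensable. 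Everything else is routine bookkeeping of the three sign cases.
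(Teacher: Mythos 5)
Your proposal is correct and follows essentially the same route as the paper's proof: a first-order condition for the variation in $M$ (which the paper packages as Lemma~\ref{lem:minM}), the pointwise Euler--Lagrange equation obtained by varying $\rho$ (using positivity from Lemma~\ref{sym-pos}) evaluated in the limit $x\to0$, and a second-order expansion of the potential $V_*$ near the origin using $\lambda>2$, with the three sign cases sorted out exactly as in the paper's Step~3. Your constant $c=\tfrac{\lambda\,(\lambda+N-2)}{2N}\int_{\R^N}|y|^{\lambda-2}\rho_*(y)\,dy$ is the one consistent with Remark~\ref{rem:taylor} (the paper's Step~3 writes $\tfrac12\,\lambda\,(\lambda-1)$, apparently omitting the angular average), but since only positivity of this constant is used, nothing is affected.
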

%-------------------------------------------------------------------------
Notice that the restrictions on $q$ and $\lambda$ in Proposition~\ref{prop:cases} are intended to cover the cases which are not already dealt with in Proposition~\ref{opt2}. The only assumptions that we shall use are $0<\alpha<1$ and $\lambda>2$. To prove Proposition~\ref{prop:cases}, let us begin with an elementary lemma.
%-------------------------------------------------------------------------
\begin{lemma}\label{lem:minM}
For constants $A$, $B>0$ and $0<\alpha<1$, define
$$
f(M) = \frac{A+M}{(B+M)^\alpha} 
\quad\text{for}\quad M\ge0 \,.
$$
Then $f$ attains its minimum on $[0,\infty)$ at $M=0$ if $\alpha A\le B$ and at $M= (\alpha A-B)/(1-\alpha)>0$ if $\alpha A> B$.
\end{lemma}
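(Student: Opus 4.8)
The plan is a one-variable calculus argument: compute $f'$, read off its sign from an affine function, and locate the minimum. First I would write, for $M\ge 0$,
\[
f'(M)=\frac{(B+M)^\alpha-\alpha\,(A+M)\,(B+M)^{\alpha-1}}{(B+M)^{2\alpha}}=\frac{(1-\alpha)\,M+(B-\alpha A)}{(B+M)^{\alpha+1}}\,.
\]
Since $B+M>0$ and $1-\alpha>0$, the sign of $f'(M)$ equals the sign of the strictly increasing affine function $M\mapsto (1-\alpha)\,M+(B-\alpha A)$, whose unique zero is at $M_0:=(\alpha A-B)/(1-\alpha)$.

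Next I would distinguish the two cases. If $\alpha A\le B$ then $M_0\le 0$, hence $f'\ge 0$ on $[0,\infty)$, so $f$ is nondecreasing there and its minimum is attained at $M=0$; this also covers the borderline $\alpha A=B$, where $M_0=0$. If $\alpha A>B$ then $M_0>0$, and $f'<0$ on $[0,M_0)$ while $f'>0$ on $(M_0,\infty)$; thus $f$ strictly decreases and then strictly increases, so $M=M_0=(\alpha A-B)/(1-\alpha)$ is the unique minimizer on $[0,\infty)$. As a sanity check one observes $f(M)=M^{1-\alpha}\,(1+o(1))\to+\infty$ as $M\to\infty$, so the minimum is indeed the critical point (or boundary point) identified above rather than escaping to infinity.

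There is essentially no obstacle here; the only point requiring a little care is the bookkeeping of the borderline case $\alpha A=B$ and the placement of the strict versus non-strict inequalities, which is handled automatically by the sign analysis of the affine numerator of $f'$.
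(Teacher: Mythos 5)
Your proof is correct and follows essentially the same route as the paper: compute $f'$, observe that its sign is that of the increasing affine numerator $(1-\alpha)M+(B-\alpha A)$, and locate the minimum accordingly. The only cosmetic difference is that the paper first splits on $A\le B$ versus $A>B$ before subdividing, whereas you read the answer directly off the sign of $B-\alpha A$; both are the same elementary calculus argument.
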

%-------------------------------------------------------------------------
\begin{proof}
We consider the function on the larger interval $(-B,\infty)$. Let us compute
$$
f'(M) = \frac{(B+M) - \alpha(A+M)}{(B+M)^{\alpha+1}} = \frac{B-\alpha A + (1-\alpha)M}{(B+M)^{\alpha+1}} \,.
$$
Note that the denominator of the right side vanishes exactly at $M=(\alpha A-B)/(1-\alpha)$, except possibly if this number coincides with $-B$.

We distinguish two cases. If $A\le B$, which is the same as $(\alpha A-B)/(1-\alpha)\le-B$, then~$f$ is increasing on $(-B,\infty)$ and then $f$ indeed attains its minimum on $[0,\infty)$ at $0$. Thus it remains to deal with the other case, $A>B$. Then $f$ is decreasing on $\big(-B,(\alpha A-B)/(1-\alpha)\big]$ and increasing on $\big[(\alpha A-B)/(1-\alpha),\infty\big)$. Therefore, if $\alpha A-B\le0$, then $f$ is increasing on $[0,\infty)$ and again the minimum is attained at $0$. On the other hand, if $\alpha A-B>0$, then $f$ has a minimum at the positive number $M=(\alpha A-B)/(1-\alpha)$.
\end{proof}

\begin{proof}[Proof of Proposition~\ref{prop:cases}.]
\emph{Step 1.} We vary $\mathcal Q[\rho_*,M]$ with respect to $M$. By the minimizing property the function
$$
M\mapsto \mathcal Q[\rho_*,M] = \frac{2\Jlambda{\rho_*}x}{\left( \int_{\R^N} \rho_*^q\,dx \right)^{(2-\alpha)/q}}\, \frac{A+M}{(B+M)^\alpha} 
$$
with
$$
A:= \frac{I_\lambda[\rho_*]}{2\Jlambda{\rho_*}x}
\quad\text{and}\quad
B := \int_{\R^N}\rho_*(x)\,dx
$$
attains its minimum on $[0,\infty)$ at $M_*$. From Lemma~\ref{lem:minM} we infer that
$$
M_* = 0
\quad\text{if and only if}\quad
\frac{\alpha}{2}\,\frac{I_\lambda[\rho_*]}{\Jlambda{\rho_*}x}\le\int_{\R^N}\rho_*(x)\,dx \,,
$$
and that $M_* = \frac{\alpha I_\lambda[\rho_*] - 2\(\Jlambda{\rho_*}x\)\(\int_{\R^N} \rho_*(y)\,dy\)}{2\,(1-\alpha)\Jlambda{\rho_*}x}$ if $\alpha\,\frac{I_\lambda[\rho_*]}{2\Jlambda{\rho_*}x} > \int_{\R^N}\rho_*(x)\,dx$.

\smallskip\noindent\emph{Step 2.} We vary $\mathcal Q[\rho,M_*]$ with respect to $\rho$. We begin by observing that $\rho_*$ is positive almost everywhere according to Lemma~\ref{sym-pos}. Because of the positivity of $\rho_*$ we obtain the Euler--Lagrange equation on $\R^N$,
$$
2\,\frac{\int_{\R^N} |x-y|^\lambda \rho_*(y)\,dy + M_* |x|^\lambda}{I_\lambda[\rho_*] + 2M_*\Jlambda{\rho_*}y} - \alpha\,\frac{1}{\int_{\R^N} \rho_*\,dy+M_*} - (2-\alpha)\, \frac{\rho_*(x)^{-1+q}}{\int_{\R^N} \rho_*(y)^q\,dy} = 0 \,.
$$
Letting $x\to 0$, we find that
$$
2\,\frac{\Jlambda{\rho_*}y}{I_\lambda[\rho_*] + 2M_*\Jlambda{\rho_*}y} - \alpha\,\frac{1}{\int_{\R^N} \rho_*(y)\,dy+M_*} = (2-\alpha)\,\frac{\rho_*(0)^{-1+q}}{\int_{\R^N} \rho_*(y)^q\,dy}\ge0 \,,
$$
with equality if and only if $\rho_*$ is unbounded. We can rewrite this as
$$
M_*\ge\frac{\alpha I_\lambda[\rho_*] - 2\left(\Jlambda{\rho_*}y\right)\left( \int_{\R^N} \rho_*\,dy\right)}{2\,(1-\alpha)\Jlambda{\rho_*}y}
$$
with equality if and only if $\rho_*$ is unbounded.

\smallskip\noindent\emph{Step 3.} Combining Steps 1 and 2 we obtain all the assertions of Proposition~\ref{prop:cases} except for the behavior of $\rho_*$ in the unbounded case. To compute the behavior near the origin we obtain, similarly as in Remark~\ref{rem:taylor}, using $\lambda>2$,
$$
\int_{\R^N} |x-y|^\lambda\rho_*(y)\,dy + M_*|x|^\lambda =\Jlambda{\rho_*}y + C\,|x|^2 \,\big(1+o(1)\big)
\quad\text{as}\quad x\to 0 \,,
$$
with
$$
C:= \frac12\,\lambda\,(\lambda-1) \int_{\R^N} |y|^{\lambda-2} \rho_*(y)\,dy \,.
$$
Thus, the Euler--Lagrange equation from Step 2 becomes
$$
\frac{2C\,|x|^2\,\big(1+o(1)\big)}{I_\lambda[\rho_*] + 2M_*\Jlambda{\rho_*}y} = (2-\alpha)\, \frac{\rho_*(x)^{-1+q}}{\int_{\R^N} \rho_*(y)^q\,dy}
\quad\text{as}\quad x\to 0 \,.
$$
This completes the proof of Proposition~\ref{prop:cases}.
\end{proof}
For any $\lambda>1$ we deduce from
$$
|x-y|^\lambda\le\big(|x|+|y|\big)^\lambda\le2^{\lambda-1}\,\big(|x|^\lambda+|y|^\lambda\big)
$$
that
$$
I_\lambda[\rho]<2^\lambda\Jlambda\rho x\int_{\R^N} \rho(x)\,dx\,.
$$
For all $\alpha\le2^{-\lambda+1}$, which can be translated into
$$
q>\frac{2N\,\big(1-2^{-\lambda}\big)}{2N\,\big(1-2^{-\lambda}\big)+\lambda}\,,
$$
Case~(1) of Proposition~\ref{prop:cases} applies and we know that $M_*=0$. Note that this bound for $q$ is in the range $\big(N/(N+\lambda)\,,\,2N/(2N+\lambda)\big)$ for all \hbox{$\lambda>1$}.

A better range for which $M_*=0$ can be obtained as follows when $N\ge3$. The superlevel sets of a symmetric non-increasing function are balls. From the layer cake representation we deduce that
$$
\label{lcr}
I_\lambda[\rho] \leq 2\,A_{N,\lambda}\Jlambda\rho x\int_{\R^N} \rho(x)\,dx
$$
where
$$
A_{N,\lambda}=\sup_{0\leq R,S<\infty}F(R,S)\quad\mbox{where}\quad F(R,S):=\frac{\iint_{B_R\times B_S} |x-y|^\lambda\,dx\,dy}{|B_R| \int_{B_S} |x|^\lambda\,dx + |B_S| \int_{B_R} |y|^\lambda\,dy}\,.
$$
For any $\lambda>1$, we have $2\,A_{N,\lambda}\le2^\lambda$, and also $A_{N,\lambda}\ge1/2$ because $I_\lambda[\1_{B_1}]\ge|B_1|\int_{B_1}|y|^\lambda\,dy$. The bound $A_{N,\lambda}\ge1/2$ can be improved to  $A_{N,\lambda}>1$ for any $\lambda>2$ as follows. We know that
$$
A_{N,\lambda}\ge F(1,1)=\frac{N\,(N+\lambda)}2\iint_{0\le r,\,s\le1}\kern-24pt r^{N-1}\,s^{N-1}\(\int_0^\pi\(r^2+s^2-2\,r\,s\,\cos\phi\)^{\lambda/2}\frac{\sin\phi^{N-2}\,d\phi}{W_N}\)dr\,ds\,.
$$
where $W_N$ is the Wallis integral $W_N:=\int_0^\pi\sin\phi^{N-2}\,d\phi$. For any $\lambda>2$, we can apply Jensen's inequality twice and obtain
\begin{multline*}
\int_0^\pi\(r^2+s^2-2\,r\,s\,\cos\phi\)^{\lambda/2}\frac{\sin\phi^{N-2}\,d\phi}{W_N}\\
\ge\(\int_0^\pi\(r^2+s^2-2\,r\,s\,\cos\phi\)\frac{\sin\phi^{N-2}\,d\phi}{W_N}\)^{\lambda/2}=\(r^2+s^2\)^{\lambda/2}
\end{multline*}
and
\begin{multline*}
\iint_{0\le r,\,s\le1}\kern-18pt r^{N-1}\,s^{N-1}\(r^2+s^2\)^{\lambda/2}dr\,ds\\
\ge\frac1{N^2}\(\iint_{0\le r,\,s\le1}\kern-18pt r^{N-1}\,s^{N-1}\(r^2+s^2\)N^2\,dr\,ds\)^{\lambda/2}=\frac1{N^2}\,\(\frac{2\,N}{N+2}\)^{\lambda/2}\,.
\end{multline*}
Hence
$$
A_{N,\lambda}\ge\frac{N+\lambda}{2\,N}\,\(\frac{2\,N}{N+2}\)^{\lambda/2}:=B_{N,\lambda}
$$
where $\lambda\mapsto B_{N,\lambda}$ is monotone increasing, so that $A_{N,\lambda}\ge B_{N,\lambda}>B_{N,2}=1$ for any $\lambda>2$. In this range we can therefore define
$$
\bar q(\lambda,N):=\frac{2N\,\big(1-A_{N,\lambda}^{-1}\big)}{2N\,\big(1-A_{N,\lambda}^{-1}\big)+\lambda}\,.
$$
Based on a numerical computation, the curve $\lambda\mapsto\bar q(\lambda,N)$ is shown on Fig.~\ref{Fig1}. The next result summarizes the above considerations.
%-------------------------------------------------------------------------
\begin{proposition}\label{Cor:Mstar} Assume that $N\ge3$ and $\lambda>2N/(N-2)$. Then, with the above notations,
$$
\bar q(\lambda,N)\le\frac{2N\,\big(1-2^{-\lambda}\big)}{2N\,\big(1-2^{-\lambda}\big)+\lambda}<\frac{2\,N}{2\,N+\lambda}
$$
and, for $\lambda>2$ large enough,
$$
\bar q(\lambda,N)>\frac N{N+\lambda}\,.
$$
If $q$ is such that $\max\big\{\bar q(\lambda,N),N/(N+\lambda)\big\}<q<\min\big\{1-2/N\,,\,2N/(2N+\lambda)\big\}$ and if $(\rho_*,M_*)$ is a minimizer for $\mathcal C_{N,\lambda,q}^{\rm{rel}}$, then $M_*=0$ and $\rho_*\in\mathrm L^\infty(\R^N)$.\end{proposition}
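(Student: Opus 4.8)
The plan is to read this proposition as a bookkeeping summary of the bounds on $A_{N,\lambda}$ obtained just above, fed into Case~(1) of Proposition~\ref{prop:cases}. The only recurring device will be the strict monotonicity of $t\mapsto\frac{2Nt}{2Nt+\lambda}$ on $[0,\infty)$ together with the identity $\alpha=2-\frac{q\lambda}{N(1-q)}$, which converts every threshold on $q$ into a threshold on a product of the form $\alpha\,A_{N,\lambda}$.

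For the chain of inequalities I would start from $|x-y|^\lambda\le2^{\lambda-1}\big(|x|^\lambda+|y|^\lambda\big)$, which gives $2A_{N,\lambda}\le2^\lambda$, hence $A_{N,\lambda}^{-1}\ge2^{-\lambda}$ and so $0<1-A_{N,\lambda}^{-1}\le1-2^{-\lambda}<1$. Monotonicity of the map then yields $\bar q(\lambda,N)\le\frac{2N(1-2^{-\lambda})}{2N(1-2^{-\lambda})+\lambda}<\frac{2N}{2N+\lambda}$, the last step using $1-2^{-\lambda}<1$. For the lower comparison, I would note that $N/(N+\lambda)$ is the value of $t\mapsto\frac{2Nt}{2Nt+\lambda}$ at $t=\tfrac12$, so that $\bar q(\lambda,N)>N/(N+\lambda)$ is equivalent to $1-A_{N,\lambda}^{-1}>\tfrac12$, i.e.\ $A_{N,\lambda}>2$; the two-fold Jensen estimate already established, $A_{N,\lambda}\ge B_{N,\lambda}=\frac{N+\lambda}{2N}\big(\frac{2N}{N+2}\big)^{\lambda/2}$, then finishes this point, since $N\ge3$ forces $\frac{2N}{N+2}>1$, whence $B_{N,\lambda}\to+\infty$ as $\lambda\to+\infty$ and so $A_{N,\lambda}>2$, i.e.\ $\bar q(\lambda,N)>N/(N+\lambda)$, for all $\lambda$ large enough.

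For the conclusion on a minimizer $(\rho_*,M_*)$ of $\mathcal C_{N,\lambda,q}^{\rm{rel}}$ with $q$ in the stated range, I would first check the hypotheses of Proposition~\ref{prop:cases}: one has $0<\alpha<1$ because $N/(N+\lambda)<q<2N/(2N+\lambda)$, and $\lambda>2$ because $\lambda>2N/(N-2)>2$ for $N\ge3$. By Lemma~\ref{sym-pos}, $\rho_*$ is radial, monotone non-increasing and positive a.e., so the layer-cake bound $I_\lambda[\rho_*]\le2A_{N,\lambda}\Jlambda{\rho_*}x\int_{\R^N}\rho_*\,dx$ applies. Unwinding the definition of $\bar q(\lambda,N)$ via $\alpha=2-\frac{q\lambda}{N(1-q)}$ and substituting this bound, the assumption $q>\bar q(\lambda,N)$ is precisely what forces the strict inequality
$$
\int_{\R^N}\rho_*\,dx>\frac{\alpha}{2}\,\frac{I_\lambda[\rho_*]}{\Jlambda{\rho_*}x}\,,
$$
i.e.\ the hypothesis of Case~(1) of Proposition~\ref{prop:cases}. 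That proposition then yields $M_*=0$ and $\rho_*$ bounded (with $\rho_*(0)$ given by the explicit formula there), hence $\rho_*\in\mathrm L^\infty(\R^N)$.

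The single genuinely substantive ingredient is the lower bound $A_{N,\lambda}\ge B_{N,\lambda}$ with $B_{N,\lambda}\to+\infty$, established above by two successive applications of Jensen's inequality (first on the sphere, for the angular integral in $F(1,1)$, then on the two radial variables): this is what keeps $\bar q(\lambda,N)$ strictly below $2N/(2N+\lambda)$ — so that Case~(1), which needs $\alpha>0$, is applicable at all — and what makes $\{\bar q(\lambda,N)<q\}$ a genuine improvement over the crude range coming from the constant $2^\lambda$. The remainder is the elementary monotonicity argument for the three inequalities and the algebra identifying the admissible interval for $q$ with the hypotheses of Proposition~\ref{prop:cases}, which I would carry out by hand.
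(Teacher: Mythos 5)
Your overall route is the same as the paper's: feed the bounds on $A_{N,\lambda}$ into Case~(1) of Proposition~\ref{prop:cases} via the layer-cake inequality, and read off the two comparison inequalities for $\bar q$ from $2A_{N,\lambda}\le2^\lambda$ and $A_{N,\lambda}\ge B_{N,\lambda}\to\infty$. Those two parts of your argument are correct, including the observation that $N/(N+\lambda)$ is the value of $t\mapsto\frac{2Nt}{2Nt+\lambda}$ at $t=\tfrac12$, so that the last claim amounts to $A_{N,\lambda}>2$.

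The gap is in the one step you explicitly deferred. Writing $2-\alpha=\frac{q\lambda}{N(1-q)}$ and using the monotonicity of $t\mapsto\frac{2Nt}{2Nt+\lambda}$, the condition $q>\frac{2Nt}{2Nt+\lambda}$ is equivalent to $\alpha<2\,(1-t)$. With $t=1-A_{N,\lambda}^{-1}$, i.e.\ for $q>\bar q(\lambda,N)$, this reads $\alpha<2\,A_{N,\lambda}^{-1}$, that is $\alpha\,A_{N,\lambda}<2$. Substituting the layer-cake bound $I_\lambda[\rho_*]\le 2\,A_{N,\lambda}\left(\int_{\R^N}|x|^\lambda\rho_*\,dx\right)\left(\int_{\R^N}\rho_*\,dx\right)$ therefore only gives
$$
\frac{\alpha}{2}\,\frac{I_\lambda[\rho_*]}{\int_{\R^N}|x|^\lambda\rho_*(x)\,dx}\ \le\ \alpha\,A_{N,\lambda}\int_{\R^N}\rho_*\,dx\ <\ 2\int_{\R^N}\rho_*\,dx\,,
$$
which misses the hypothesis $\frac{\alpha}{2}\,\frac{I_\lambda[\rho_*]}{\int_{\R^N}|x|^\lambda\rho_*\,dx}<\int_{\R^N}\rho_*\,dx$ of Case~(1) by a factor of $2$. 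What the layer-cake bound actually delivers is Case~(1) under $\alpha\,A_{N,\lambda}<1$, i.e.\ for $q$ above the threshold obtained with $t=1-(2A_{N,\lambda})^{-1}$; this is consistent with the crude computation preceding the proposition, where $2A=2^\lambda$ produces exactly $t=1-2^{-\lambda}$, and it is a strictly larger threshold than $\bar q(\lambda,N)$ as defined, since $1-(2A)^{-1}>1-A^{-1}$ when $A>1$. So your claim that "$q>\bar q(\lambda,N)$ is precisely what forces" the Case~(1) inequality does not survive the algebra: for $\alpha\in[A_{N,\lambda}^{-1},\,2A_{N,\lambda}^{-1})$ the argument proves nothing. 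To be fair, the same factor of $2$ sits in the source's definition of $\bar q$, so you have faithfully reproduced its argument; but precisely because you asserted the outcome of the deferred computation rather than performing it, the discrepancy went undetected, and as written the proof of the final assertion is not closed.
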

%-------------------------------------------------------------------------

%-------------------------------------------------------------------------
\setlength\unitlength{1cm}
\begin{figure}[!ht]\begin{center}\begin{picture}(12,8)
\put(0,0){\includegraphics[width=12cm]{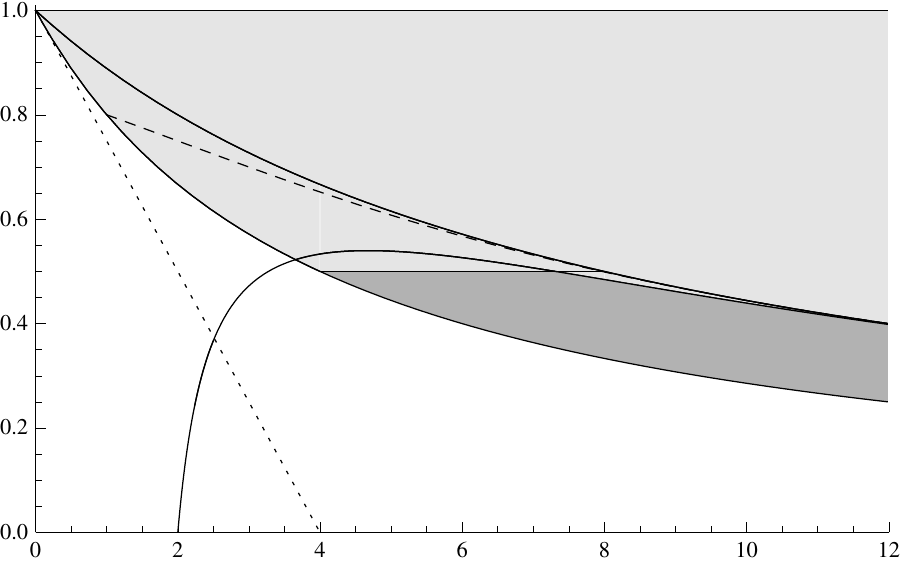}}
\put(11.5,0.5){$\lambda$}
\put(-0.25,6.5){$q$}
\put(0.5,3.7){$q=\frac{N-2}N$}
\put(3.8,4.25){$\scriptstyle q=\bar q(\lambda,N)$}
\put(10,3.55){$q=\frac{2N}{2N+\lambda}$}
\put(4.9,2.7){$q=\frac N{N+\lambda}$}
\end{picture}
\caption{\label{Fig1}\scriptsize Main regions of the parameters (here $N=4$). The case $q=2N/(2N+\lambda)$ has already been treated in~\cite{DZ15,MR3666824}. Inequality~\eqref{ineq:rHLS} holds with a positive constant $\mathcal C_{N,\lambda,q}$ if $q>N/(N+\lambda)$, which determines the admissible range corresponding to the grey area, and it is achieved by a function $\rho$ (without any Dirac mass) in the light grey area. The dotted line is $q=1-\lambda/N$: it is tangent to the admissible range of parameters at $(\lambda,q)=(0,1)$. In the dark grey region, Dirac masses with $M_*>0$ are not excluded. The dashed curve corresponds to the curve $q=2N\,\big(1-2^{-\lambda}\big)\big/\big(2N\big(1-2^{-\lambda}\big)+\lambda\big)$ and can hardly be distinguished from $q=2N/(2N+\lambda)$ when $q$ is below $1-2/N$. The curve $q=\bar q(\lambda,N)$ of Corollary~\ref{Cor:Mstar} is also represented . Above this curve, no Dirac mass appears when minimizing the relaxed problem corresponding to~\eqref{ineq:rHLS}. Whether Dirac masses appear in the region which is not covered by Corollary~\ref{Cor:Mstar} is an open question.}
\end{center}\end{figure}
%-------------------------------------------------------------------------

%%%%%%%%%%%%%%%%%%%%%%%%%%%%%%%%%%%%%%%%%%%%%%%%%%%%%%%%%%%%%%%%%%%%%%%%%%
%%%%%%%%%%%%%%%%%%%%%%%%%%%%%%%%%%%%%%%%%%%%%%%%%%%%%%%%%%%%%%%%%%%%%%%%%%
\subsection*{Acknowledgments}\begin{spacing}{0.75}{\noindent\scriptsize This research has been partially supported by the projects \emph{EFI}, contract~ANR-17-CE40-0030 (J.D.) and \emph{Kibord}, contract~ANR-13-BS01-0004 (J.D., F.H.) of the French National Research Agency (ANR), and by the U.S. National Science Foundation through grant DMS-1363432 (R.L.F.). The research stay of F.H.~in Paris in December 2017 was partially supported by the Simons Foundation and by Mathematisches Forschungsinstitut Oberwolfach. Some of the preliminary investigations were done at the Institute Mittag-Leffler during the fall program \emph{Interactions between Partial Differential Equations \& Functional Inequalities}. The authors thank J.A.~Carrillo for preliminary discussions which took place there and R.L.F.~thanks the University Paris-Dauphine for hospitality in February 2018.\\[2pt]
\copyright\,2018 by the authors. This paper may be reproduced, in its entirety, for non-commercial purposes.}\end{spacing}

%%%%%%%%%%%%%%%%%%%%%%%%%%%%%%%%%%%%%%%%%%%%%%%%%%%%%%%%%%%%%%%%%%%%%%%%%%
%%%%%%%%%%%%%%%%%%%%%%%%%%%%%%%%%%%%%%%%%%%%%%%%%%%%%%%%%%%%%%%%%%%%%%%%%%
%\bibliographystyle{siam}\bibliography{reversehls}

%%%%%%%%%%%%%%%%%%%%%%%%%%%%%%%%%%%%%%%%%%%%%%%%%%%%%%%%%%%%%%%%%%%%%%%%%%
%%%%%%%%%%%%%%%%%%%%%%%%%%%%%%%%%%%%%%%%%%%%%%%%%%%%%%%%%%%%%%%%%%%%%%%%%%
%\medskip\begin{center}\rule{2cm}{0.5pt}\end{center}\medskip
\end{document}